\newtheorem{theorem}{Theorem}[section]
\newtheorem*{plunnecke}{Pl\"unnecke's Theorem}
\newtheorem*{correspondence}{Correspondence Principle I}
\newtheorem*{corr2}{Correspondence Principle II}
\newtheorem*{corr22}{Correspondence Principle}
\newtheorem{lemma}{Lemma}[section]
\newtheorem{corollary}[lemma]{Corollary}
\newtheorem{proposition}{Proposition}[section]
\theoremstyle{definition}
\newtheorem{definition}{Definition}[section]
\theoremstyle{definition}
\theoremstyle{definition}
\theoremstyle{definition}
\theoremstyle{definition}\newtheorem{remark}[theorem]{Remark}
\theoremstyle{definition}
\newcommand{\eps}{\epsilon}
\newcommand{\cC}{\mathcal{C}}
\newcommand{\cE}{\mathcal{E}}
\newcommand{\cL}{\mathcal{L}}
\newcommand{\cM}{\mathcal{M}}
\newcommand{\cP}{\mathcal{P}}
\newcommand{\cS}{\mathcal{S}}
\newcommand{\bZ}{\mathbb{Z}}
\newcommand{\bN}{\mathbb{N}}
\newcommand{\ra}{\rightarrow}
\newcommand{\onto}{\xymatrix{\ar@{>>}[r]&}}
\newcommand{\da}[4]{\xymatrix{#1 \ar@<.5ex>[r]^{#2} \ar@<-.5ex>[r]_{#3} & #4}}
\newcommand{\qand}{\quad \textrm{and} \quad}
\newcommand{\qqand}{\qquad \textrm{and} \qquad}
\numberwithin{equation}{section}
\begin{document}

\title{Pl\"unnecke Inequalities for countable abelian groups}


\author{Michael Bj\"orklund}
\address{Department of Mathematics , ETH Z\"urich, Z\"urich, Switzerland}
\curraddr{Department of Mathematics, Chalmers, Gothenburg, Sweden}
\email{micbjo@chalmers.se}
\thanks{}

\author{Alexander Fish}
\address{School of Mathematics and Statistics, University of Sydney, Australia}
\curraddr{}
\email{alexander.fish@sydney.edu.au}
\thanks{}

\subjclass[2010]{Primary: 37B05; Secondary: 11B13, 11K70 }

\keywords{Ergodic ramsey theory, additive combinatorics}

\date{}

\dedicatory{}

\begin{abstract} 
We establish in this paper a new form of Pl\"unnecke-type inequalities for ergodic probability measure-preserving actions of any countable abelian group. Using a correspondence principle for product sets, this allows us to deduce lower bounds on the upper and lower Banach densities of  
any product set in terms of the upper Banach density of an iterated 
product set of one of its addends. These bounds are new already in the
case of the integers.

We also introduce the notion of an ergodic basis, which is parallel, but significantly weaker than the analogous notion of an additive basis, and deduce Pl\"unnecke bounds on their impact functions with respect to 
both the upper and lower Banach densities on any countable abelian 
group. 
\end{abstract}

\maketitle


\section{Introduction}

\subsection{General comments}
\label{general}
Let $G$ be a countable group and suppose $A, B \subset G$ are 
non-empty subsets. We define the \emph{product set} $AB$ by
\[
AB = \Big\{ ab \, : \, a \in A, \: \:  b \in B \Big\} \subset G.
\]
Let $\cM(G)$ denote the set of means on $G$, i.e. the convex set of all 
positive norm-one functionals on the C*-algebra $\ell^{\infty}(G)$. 
Note that every $\lambda$ in $\cM(G)$ gives rise to a
\emph{finitely additive} probability measure $\lambda'$ 
on the group $G$ via the formula
\begin{equation}
\label{mean}
\lambda'(B) = \lambda(\chi_B), \quad B \subset G,
\end{equation}
where $\chi_B$ denotes the indicator function on the set $B$. 
Given a set $\cC \subset \cM(G)$, we define the \emph{upper}
and \emph{lower Banach densities} of a set $B \subset G$ 
with respect to $\cC$ by
\[
d^{*}_{\cC}(B) = \sup_{\lambda \in \cC} \lambda'(B)
\qqand
d_*^{\cC}(B) = \inf_{\lambda \in \cC} \lambda'(B),
\]
respectively. Fix $A \subset G$ and $\cC \subset \cM(G)$ and define the 
\emph{upper} and \emph{lower impact functions} with respect 
to $A$ and $\cC$ by
\[
\cC_A^{*}(t) = 
\inf\Big\{ d^{*}_{\cC}(AB) \, : \, d^{*}_{\cC}(B) \geq t \Big\}
\qand
\cC^{A}_*(t) = 
\inf\Big\{ d_{*}^{\cC}(AB) \, : \, d_{*}^{\cC}(B) \geq t \Big\},
\]
for $0 \leq t \leq 1$, respectively. A fundamental problem in 
additive combinatorics is to understand the behavior of these 
functions for various classes of sets $A \subset G$ and 
$\cC \subset \cM(G)$. \\

In the case of the additive group $\bZ$ of integers, a classically important subset of $\cM(\bZ)$ is the set $\cS$ of \emph{Birkhoff means}. We say that $\lambda \in \cM(\bZ)$ is a \emph{Birkhoff mean} if it is a 
weak*-cluster point of the sequence $(\lambda_n)$ of means
on $\bZ$ defined by
\[
\lambda_n(\varphi) = \frac{1}{n} \sum_{k=0}^{n-1} \varphi(k),
\quad \varphi \in \ell^{\infty}(\bZ).
\]
One readily checks that every Birkhoff mean is \emph{invariant}, i.e. $\lambda'(gB) = \lambda'(B)$ for all $B \subset \bZ$ and $g$ in $\bZ$. \\

We should warn the reader that the associated upper and lower Banach 
densities with respect to $\cS$ are often referred to as the \emph{upper} and \emph{lower asymptotic densities} respectively in the literature, and are usually defined by
\[
d^{*}_{\cS}(B) = \varlimsup_{n \ra \infty} \frac{|B \cap [0,n]|}{n+1} 
\qqand 
d_*^{\cS}(B) = \varliminf_{n \ra \infty} \frac{|B \cap [0,n]|}{n+1}
\]
respectively. \\

Given a set $A \subset G$, we denote by $A^{k}$ the $k$-fold product
set of $A$ with itself. We shall say that $A$ is a \emph{basis of order $k$} with respect to $\cC$ if $d^{*}_{\cC}(A^{k}) = 1$ and 
$A$ is a \emph{uniform basis of order $k$} with respect to $\cC$ if $d_*^{\cC}(A^{k}) = 1$. Clearly, every uniform basis of order $k$ with respect to $\cC$ is a basis of order $k$ with respect to $\cC$, but the converse does not hold in general. In the case when $G = \bZ$ and $\cC = \cS$, the terms \emph{upper} and \emph{lower asymptotic basis} are more commonly used in the literature.  

The following celebrated result by 
Pl\"unnecke (Satz 1.2 in \cite{Plunnecke70}) gives a non-trivial lower 
bound on $\cS_{*}^A$ when $A$ is a uniform basis of order $k$ (see
the proof of Theorem 7.2 in \cite{RuSS} for the easy derivation of this statement from Pl\"unnecke's original argument).

\begin{plunnecke}
Let $\cS \subset \cM(\bZ)$ denote the set of Birkhoff means on $\bZ$
and suppose $A \subset \bZ$ is a uniform basis of order $k$ with respect
to $\cS$. Then
\[
\cS^{A}_{*}(t) \geq t^{1-\frac{1}{k}},
\]
for all $0 \leq t \leq 1$.
\end{plunnecke}
On the other hand, Jin constructed in \cite{Jin11} a basis 
$A \subset \bZ$ of order $2$ with respect to $\cS$ such that 
\[
\cS^{*}_A\left(\frac{1}{2}\right) = \frac{1}{2}.
\]

\subsection{A correspondence principle for product sets}
\label{corr}
The main aim of this paper is to establish Pl\"unnecke bounds on
the impact functions with respect to the upper and lower Banach 
densities associated to the set $\cL_G$ of \emph{all} invariant means on any countable abelian group $G$. We shall introduce below the notion of
an \emph{ergodic basis of order $k$}, which is significantly weaker than
the notion of a basis which we discussed above. However, before we can
do this, we need to give the basic set up. \\

Let $G$ be a countable abelian group and let $\cL_G$ denote the set of 
all invariant means on $G$. By a classical theorem of Kakutani-Markov, 
this set is always non-empty. To avoid cluttering with sub-indices, we 
shall adopt the conventions
\[
d^{*} = d^*_{\cL_G} \qand d_* = d_*^{\cL_G}
\]
from now on, and simply refer to $d^{*}$ and $d_*$ as the \emph{upper} and \emph{lower Banach densities} on $G$ respectively.  

Let $(X,\mu)$ be a probability measure space such that the Hilbert space $L^2(X,\mu)$ is separable. If $G$ acts on $X$ by bi-measurable bijections
which preserve the measure $\mu$, then we say that $(X,\mu)$ is a 
\emph{$G$-space}. If $X$ in addition is compact and the $G$-action is 
by homeomorphisms which preserve $\mu$, then we say that it is
a \emph{compact $G$-space}. If there are no $G$-invariant 
measurable sets $B \subset X$ with $0 < \mu(B) < 1$, then we say 
that $\mu$ is an ergodic probability measure. If $A \subset G$
and $B \subset X$ is a measurable set, then we denote by $AB$ the union
of all the sets of form $aB$, where $a$ ranges over $A$. In particular,
if $\mu$ is an ergodic probability measure, then $\mu(GB)$ equals 
either zero or one, depending on whether $B$ is a $\mu$-null set or 
not.  

An important relation between these concepts and the Banach densities discussed earlier can be summarized in the following proposition which will be proved in the Appendix. 

\begin{corr22}
Let $G$ be a countable abelian group and suppose $A, B' \subset G$. 
Then there exists a compact metrizable space $X$, equipped with an 
action of $G$ by homeomorphisms, a clopen set $B \subset X$ and 
\emph{ergodic} $G$-invariant probability measures $\mu$ and $\nu$ 
on $X$ such that 
\[
d^{*}(B') = \mu(B) 
\qand 
d_*(B') \leq \nu(B)
\]
and
\[
d^{*}(AB') \geq \mu(AB) 
\qand 
d_*(AB') \geq \nu(AB).
\]
\end{corr22}

The following notion will play an important role in this paper. 

\begin{definition}[Ergodic set]
Let $(X,\mu)$ be an ergodic $G$-space. We say that a set $A \subset G$
is an \emph{ergodic set with respect to $(X,\mu)$} if $\mu(AB)$ equals 
one for every measurable set $B \subset X$ of positive $\mu$-measure. 
If $A \subset G$ is an ergodic set with respect to every ergodic $G$-space,
we simply say that $A$ is an \emph{ergodic set}.
\end{definition}

By definition, $G$ itself is an ergodic set. However, no proper subgroup 
of $G$ can be an ergodic set for all ergodic $G$-spaces. There are several
criteria which ensure that a set is ergodic. One of the most well-known involves the notion of \emph{equidistributed sets}. Recall that a set 
$A \subset G$ is \emph{equidistributed} if there exists an exhaustion $(A_n)$ of $A$ by finite sets such that 
\[
\lim_{n \ra \infty} \frac{1}{|A_n|} \sum_{g \in A_n} \chi(g) = 0
\]
for all non-trivial characters $\chi$ on $G$. Every equidistributed set
is ergodic, but does not need to have positive upper Banach density with
respect to $\cL_G$. For instance, the set 
\[
A = \Big\{ [n^{3/2}] \, : \, n \geq 1 \Big\} \subset \bZ,
\] 
where $[\cdot]$ denotes the integer part, is known to be 
equidistributed (see e.g. Theorem 1.3 in \cite{Bos}).

\subsection{Statements of the main results}
In order to state our results, we need the following definition. 

\begin{definition}[Ergodic basis]
Let $(X,\mu)$ be an ergodic $G$-space. We say that $A \subset G$
is an \emph{ergodic basis or order $k$ with respect to $(X,\mu)$} if $A^k$ is an 
ergodic set with respect to $(X,\mu)$. If $A$ is an ergodic basis or order 
$k$ with respect to every $G$-space, then we simply say that $A$ is an 
\emph{ergodic basis of order $k$}.
\end{definition}

Clearly, every basis of order $k$ with respect to $\cL_G$ (see Subsection \ref{general} for the definition) is an ergodic basis of order $k$. 

\begin{theorem}
\label{thm1}
Let $G$ be a countable abelian group and suppose $(X,\mu)$ is an ergodic
$G$-space. If $A \subset G$ is an ergodic basis of order $k$ with respect 
to $(X,\mu)$, then
\[
\mu(AB) \geq \mu(B)^{1-\frac{1}{k}}
\]
for every measurable subset $B \subset X$.
\end{theorem}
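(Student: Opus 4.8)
The plan is to reduce the theorem to a Plünnecke--Ruzsa inequality for the measure preserving action, and to feed in the ergodic basis hypothesis only at the very last step. We may assume $\mu(B)>0$, since otherwise the right-hand side is zero. Set
\[
K=\inf\Big\{\tfrac{\mu(AC)}{\mu(C)}\ :\ C\subseteq B \text{ measurable},\ \mu(C)>0\Big\},
\]
and note that $K\le \mu(AB)/\mu(B)$ because $B$ is itself an admissible competitor. Suppose for the moment that this infimum is attained by some $E\subseteq B$ with $\mu(E)>0$. The core of the proof will be the estimate
\[
\mu(A^{k}E)\le K^{k}\,\mu(E).
\]
Granting it, the theorem follows quickly: since $A$ is an ergodic basis of order $k$ with respect to $(X,\mu)$ and $\mu(E)>0$, the set $A^{k}$ is ergodic, so $\mu(A^{k}E)=1$; comparing with the previous display gives $1\le K^{k}\mu(E)$, whence $K\ge \mu(E)^{-1/k}\ge \mu(B)^{-1/k}$ and therefore $\mu(AB)\ge K\,\mu(B)\ge \mu(B)^{1-1/k}$.

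To prove $\mu(A^{k}E)\le K^{k}\mu(E)$ I would transport Petridis's proof of the Plünnecke--Ruzsa inequality into the dynamical setting. The key lemma is that, for the minimizing set $E$ and every \emph{finite} $D\subseteq G$,
\[
\mu\big(A(DE)\big)\le K\,\mu(DE), \qquad DE=\bigcup_{d\in D}dE,
\]
which I would establish by induction on $|D|$. When $D=\{z\}$ this reads $\mu(A(zE))=\mu(z(AE))=\mu(AE)=K\mu(E)$, using commutativity of $G$ and the invariance of $\mu$. For the step from $D$ to $D\cup\{z\}$ I would introduce the measurable set
\[
E'=\{p\in E\ :\ zp\in DE\},
\]
for which $z(AE')\subseteq A(DE)$ and $zE'=(zE)\cap(DE)$; inclusion--exclusion applied to $A(DE)\cup z(AE)$, together with the minimality estimate $\mu(AE')\ge K\mu(E')$ (valid since $E'\subseteq B$), then yields the bound for $D\cup\{z\}$. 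Because this induction proceeds only over the finite set $D$, it is insensitive to $A$ being infinite. Finally I would write $A^{j}E$ as the increasing union of the sets $DE$ over finite $D\subseteq A^{j}$ and apply the lemma $k$ times, using continuity of $\mu$ along increasing unions at each stage, to arrive at $\mu(A^{k}E)\le K^{k}\mu(E)$.

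The main obstacle is the existence of the exact minimizer $E$. In contrast to the finite combinatorial setting, the infimum defining $K$ is taken over the atomless lattice of measurable subsets of $B$, and a minimizing sequence need not converge to an indicator function. I expect to exploit the submodularity of $C\mapsto\mu(AC)$: for measurable $C_{1},C_{2}\subseteq B$ one has
\[
\mu(A(C_{1}\cup C_{2}))+\mu(A(C_{1}\cap C_{2}))\le \mu(AC_{1})+\mu(AC_{2}),
\]
so that if $C_{1}$ and $C_{2}$ both realize the ratio $K$, then so does $C_{1}\cup C_{2}$; exact minimizers are thus stable under countable increasing unions and a maximal one can be extracted by exhaustion. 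The remaining delicate point is to produce at least one positive-measure exact minimizer, for which I would pass to fractional competitors $f\colon X\to[0,1]$, minimize the relaxed functional $\int \sup_{a\in A} f(a^{-1}x)\,d\mu(x)$ for fixed $\int f\,d\mu$ using weak-$*$ compactness, and round the optimizer back to an indicator by a bathtub argument. It is worth stressing why an exact minimizer is essential: the cheaper route through $\varepsilon$-approximate minimizers contributes errors that accumulate linearly in $|D|$, and since $|A^{k-1}|$ is typically infinite this only controls the finite-$A$ case; removing this restriction is exactly what makes the existence of $E$ the crux.
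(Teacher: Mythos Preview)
Your overall plan---adapt Petridis's inequality to the measure-preserving action and feed in the ergodic-basis hypothesis only at the end---matches the paper's strategy. The divergence, and the genuine gap, lies in producing the exact minimizer $E$. Your relaxed functional $F(f)=\int\sup_{a\in A}f(a^{-1}x)\,d\mu$ is indeed weak-* lower semicontinuous (it is the supremum, over finite $A'\subset A$, of functionals that are themselves suprema of weak-* continuous linear maps), so a minimizer $f^{*}$ at each fixed level $\int f=c$ does exist. But its value $m(c)$ need only satisfy $m(c)\ge Kc$, with $m$ convex and $m'(0^{+})=K$; nothing prevents $m(c)>Kc$ for every $c>0$. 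In that case the layer-cake identity only tells you that the level-set ratios $\mu(AE_{t}^{*})/\mu(E_{t}^{*})$ \emph{average} to $m(c)/c>K$, and no individual level set need realize $K$. The bathtub principle rounds minimizers of \emph{linear} functionals to indicators; it gives no such conclusion for this nonlinear convex $F$. So the existence of $E$ remains unestablished, and without it your Petridis induction---whose base case needs $\mu(AE)=K\mu(E)$ exactly---does not start.

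The paper never seeks an exact minimizer. It runs Petridis with $\varepsilon$-approximate minimizers, but only for \emph{finite} $A_{0}\subset A$, where the accumulated error is controlled in terms of $|A_{0}|$ (Lemma~\ref{petridis} and Proposition~\ref{petridis2}); this already gives $c(A_{0}^{k},B)\le c(A_{0},B)^{k}$ for finite $A_{0}$. An increment/exhaustion step (Proposition~\ref{prop1.3}) then upgrades this to the $\delta$-restricted ratio $c_{\delta}$, producing a near-minimizer of measure at least $\delta\mu(B)$. The passage from finite $A'\subset A^{k}$ to the full $A^{k}$ is handled separately (Propositions~\ref{prop2} and~\ref{prop3}): from a minimizing sequence $(B_{j})$ with $\mu(B_{j})\ge\delta\mu(B)$ and a finite exhaustion $(A_{j})$ of $A^{k}$, one extracts a subsequence whose indicators Ces\`aro-converge a.e.\ to some $f$, and proves the one-sided bound
\[
\limsup_{j}\frac{\mu(A_{j}B_{j})}{\mu(B_{j})}\ \ge\ \frac{\int_{0}^{1}\mu(A^{k}E_{t})\,dt}{\int_{0}^{1}\mu(E_{t})\,dt}
\]
for the level sets $E_{t}$ of $f$. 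This is precisely your fractional/layer-cake idea, but deployed as an inequality rather than to manufacture a minimizer; the ergodic-basis hypothesis then forces $\mu(A^{k}E_{t})=1$ whenever $\mu(E_{t})>0$, and letting $\delta\to 0$ finishes the proof.
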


An application of the Correspondence Principle mentioned in Subsection \ref{corr} yields the following Pl\"unnecke bounds with respect to the upper and lower Banach densities on any countable abelian group. 

\begin{corollary}
\label{cor1}
Let $G$ be a countable abelian group and suppose $A \subset G$ is an
ergodic basis of order $k$. Then, 
\[
d^{*}(AB) \geq d^*(B)^{1-\frac{1}{k}} 
\qqand
d_*(AB) \geq d_*(B)^{1-\frac{1}{k}}
\]
for all $B \subset G$. In particular, this holds whenever the set $A^{k}$ is equidistributed in $G$.
\end{corollary}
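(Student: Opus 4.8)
The plan is to feed the Correspondence Principle and Theorem \ref{thm1} into one another; once those two results are in hand, the corollary reduces to bookkeeping with inequalities. Fix a subset $B' \subset G$ and apply the Correspondence Principle to the pair $(A, B')$ to produce a compact metrizable $G$-space $X$, a clopen set $B \subset X$, and two \emph{ergodic} $G$-invariant probability measures $\mu$ and $\nu$ satisfying
\[
d^{*}(B') = \mu(B), \qquad d_*(B') \leq \nu(B),
\]
\[
d^{*}(AB') \geq \mu(AB), \qquad d_*(AB') \geq \nu(AB).
\]
The crucial feature is that both $\mu$ and $\nu$ are ergodic, which is exactly the hypothesis needed to invoke Theorem \ref{thm1}.

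For the upper density bound I would observe that, since $A$ is an ergodic basis of order $k$, it is in particular an ergodic basis of order $k$ with respect to the specific $G$-space $(X,\mu)$, so Theorem \ref{thm1} applies to the measurable (indeed clopen) set $B$ and yields $\mu(AB) \geq \mu(B)^{1-\frac1k}$. Chaining this with the transfer inequalities and the equality $\mu(B) = d^{*}(B')$ gives
\[
d^{*}(AB') \geq \mu(AB) \geq \mu(B)^{1-\frac1k} = d^{*}(B')^{1-\frac1k}.
\]

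The lower density bound proceeds identically but with the measure $\nu$, and it requires one extra observation. Applying Theorem \ref{thm1} to $(X,\nu)$ gives $\nu(AB) \geq \nu(B)^{1-\frac1k}$; here, however, the relation to $d_*(B')$ is only the inequality $\nu(B) \geq d_*(B')$ rather than an equality. The step that renders this harmless—and which I regard as the single point demanding a moment's care—is the monotonicity of $t \mapsto t^{1-\frac1k}$ on $[0,1]$, valid because $k \geq 1$ forces $1 - \frac1k \geq 0$. This monotonicity converts $\nu(B) \geq d_*(B')$ into $\nu(B)^{1-\frac1k} \geq d_*(B')^{1-\frac1k}$, so that
\[
d_*(AB') \geq \nu(AB) \geq \nu(B)^{1-\frac1k} \geq d_*(B')^{1-\frac1k}.
\]

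Finally, for the concluding assertion I would recall the fact, already noted in the excerpt, that every equidistributed set is an ergodic set. Thus if $A^{k}$ is equidistributed then $A^{k}$ is an ergodic set with respect to every ergodic $G$-space, which is precisely the statement that $A$ is an ergodic basis of order $k$; the two displayed bounds then follow from the first part. I do not anticipate a genuine obstacle, since the substantive content has been quarantined in Theorem \ref{thm1} (the analytic Pl\"unnecke estimate) and in the Correspondence Principle (the passage from Banach densities to ergodic measures). The corollary merely transports the former through the latter, with the monotonicity of the power function being the only detail to watch in the lower-density case.
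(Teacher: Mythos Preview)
Your proposal is correct and follows exactly the route the paper indicates: the paper states the corollary immediately after Theorem \ref{thm1} with the remark that it follows by an application of the Correspondence Principle, and you have simply spelled out that application, including the monotonicity observation needed for the lower-density bound and the remark that equidistributed sets are ergodic. There is nothing to add.
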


Our methods also give the following general bound when no constraint 
is forced upon the set $A \subset G$.
\begin{theorem}
\label{thm2}
Let $G$ be a countable abelian group and suppose $(X,\mu)$ is
an ergodic $G$-space. Let $A \subset G$ and
suppose $B \subset X$ is a measurable subset such that 
\[
\mu(AB) \leq K \cdot \mu(B)
\]
for some $K$. Then, 
\[
d^*(A^k) \leq K^k \cdot \mu(B),
\]
for every integer $k$. In particular,
\[
\mu(AB) \geq d^*(A^k)^{\frac{1}{k}} \cdot \mu(B)^{1-\frac{1}{k}}
\]
for all $k$.
\end{theorem}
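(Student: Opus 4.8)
The plan is to prove the displayed inequality $d^{*}(A^k)\le K^k\mu(B)$ and then read off the ``in particular'' clause. We may assume $\mu(B)>0$, since otherwise everything is trivial. It suffices to treat the extremal value $K=\mu(AB)/\mu(B)$: a larger admissible $K$ only weakens the conclusion, so monotonicity reduces to this case, and the final clause is then just the rearrangement of $d^{*}(A^k)\le\mu(AB)^k/\mu(B)^{k-1}$ into $\mu(AB)\ge d^{*}(A^k)^{1/k}\mu(B)^{1-1/k}$. I would decompose the argument into an \emph{ergodic} step, which bounds the density $d^{*}(A^k)$ above by a measure, and a \emph{combinatorial} step of Pl\"unnecke type, which bounds that measure.

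\emph{Ergodic step.} I claim that $d^{*}(A^k)\le\mu(A^kB')$ for \emph{every} measurable $B'\subseteq X$ with $\mu(B')>0$. For $x\in X$ set $R_x=\{g\in G:gx\in B'\}$. Because $G$ is abelian, $gx\in A^kB'$ holds exactly when $a^{-1}g\in R_x$ for some $a\in A^k$, so that
\[
\{g\in G:gx\in A^kB'\}=A^kR_x .
\]
Fix an invariant mean $\lambda\in\cL_G$. Since $\mu(\{x:gx\in A^kB'\})=\mu(A^kB')$ for every $g$ by invariance of $\mu$, a standard averaging (transference) identity yields
\[
\int_X\lambda'(A^kR_x)\,d\mu(x)=\mu(A^kB').
\]
By ergodicity and $\mu(B')>0$ one has $\mu(GB')=1$, so $R_x\neq\emptyset$ for $\mu$-almost every $x$; picking any $r\in R_x$ gives $A^kr\subseteq A^kR_x$, whence $\lambda'(A^kR_x)\ge\lambda'(A^kr)=\lambda'(A^k)$ by translation invariance of $\lambda'$. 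Integrating gives $\mu(A^kB')\ge\lambda'(A^k)$, and taking the supremum over $\lambda\in\cL_G$ proves the claim.

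\emph{Combinatorial step.} It now suffices to find one subset $B_0\subseteq B$ with $\mu(A^kB_0)\le K^k\mu(B)$. Let $K_0=\inf\{\mu(AB')/\mu(B'):B'\subseteq B,\ \mu(B')>0\}$, so $K_0\le K$. I would first reduce to finite $A$ by exhausting $A$ with finite sets $A_n\uparrow A$ and using $\mu(A_nB')\uparrow\mu(AB')$; for finite $A$ the functional $B'\mapsto\mu(AB')$ is continuous for the pseudometric $\mu(B_1\triangle B_2)$, so the infimum is attained at some $B_0\subseteq B$, and minimality gives $\mu(AB'')\ge K_0\mu(B'')$ for all $B''\subseteq B_0$. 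With this in hand I would carry out the measure-algebra analogue of Petridis's covering argument: by induction on $|C|$ over finite $C\subseteq G$,
\[
\mu(ACB_0)\le K_0\,\mu(CB_0),
\]
with base case $\mu(AcB_0)=\mu(AB_0)=K_0\mu(cB_0)$ and an inductive step that uses the minimality of $B_0$ to bound the measure of the freshly covered piece. Specialising to $C=A^{k-1},A^{k-2},\dots,A^{0}$ and telescoping produces $\mu(A^kB_0)\le K_0^{\,k}\mu(B_0)$.

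Combining the two steps with $\mu(B_0)\le\mu(B)$ and $K_0\le K$ gives
\[
d^{*}(A^k)\le\mu(A^kB_0)\le K_0^{\,k}\mu(B_0)\le K^k\mu(B),
\]
and the ``in particular'' clause follows by the rearrangement recorded above. The hard part will be the combinatorial step: transplanting Petridis's finite covering induction into the measure algebra, and especially securing an honest minimiser $B_0$, since for infinite $A$ the set function $B'\mapsto\mu(AB')$ need not be continuous — this is why I would pass through finite truncations of $A$ and take a monotone limit. A lesser technical point is the exchange of the mean $\lambda'$ with integration over $X$ in the ergodic step, which is exactly the averaging identity furnished by the Correspondence Principle framework and which I would invoke rather than reprove. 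Finally, should the subset-free bound $\mu(A^kB)\le(\mu(AB)/\mu(B))^k\mu(B)$ already be available (it is the natural engine behind Theorem~\ref{thm1}), the combinatorial step becomes unnecessary and the theorem drops out of the ergodic step applied to $B'=B$.
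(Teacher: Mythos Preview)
Your two-step decomposition is exactly the paper's: the ergodic step is Proposition~\ref{prop2.2} (proved in Appendix~II along the lines you sketch, with the mean/integral interchange handled through the Correspondence Principle as you anticipate), and the combinatorial step is Proposition~\ref{prop2.1}. A minor issue in the latter: even for finite $A$, continuity of $B'\mapsto\mu(AB')$ in the measure-algebra pseudometric does \emph{not} yield attainment of $K_0=c(A,B)$, because $\{B'\subset B:\mu(B')>0\}$ is not compact. The paper's Petridis adaptation (Lemma~\ref{petridis}, Proposition~\ref{petridis2}) works throughout with $(1+\eps)$-approximate minimizers and never assumes a genuine minimizer; you would have to do the same.

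The real gap is your reduction to finite $A$. For each finite $A_n\uparrow A$, Petridis hands you (approximate) $B_0^{(n)}\subset B$ with $\mu(A_n^kB_0^{(n)})\lesssim K^k\mu(B_0^{(n)})$, but your ergodic step needs a single $B'$ of positive measure with $\mu(A^kB')$ controlled, and the $B_0^{(n)}$ have no reason to be nested or convergent, nor to say anything about $\mu(A^kB_0^{(n)})$. Equivalently, from $c(A_n^k,B)\le K^k$ for each $n$ you only obtain $\sup_n c(A_n^k,B)\le K^k$, whereas what is needed is $c(A^k,B)\le K^k$; the inequality $\sup_n\inf_{B'}\le\inf_{B'}\sup_n$ goes the wrong way. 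Closing this gap is the main technical work of the paper (Sections~\ref{Section prop2} and~\ref{proofProp2.1}): one extracts a subsequence along which the Ces\`aro averages of the $\chi_{B_0^{(n)}}$ converge $\mu$-a.e.\ to some $f$, and then shows that a suitable level set $E_t=\{f\ge t\}\subset B$ witnesses the bound on $c(A^k,B)$. Your ``monotone limit'' is a placeholder for precisely this machinery, and without it the combinatorial step does not go through for infinite $A$.
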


An application of the Correspondence Principle above now immediately yields the following two corollaries of Theorem \ref{thm2}.

\begin{corollary}
\label{cor2}
Let $G$ be a countable abelian group and suppose 
$A, B \subset G$ satisfy  
\[
d^*(AB) \leq K \cdot d^*(B) 
\]
for some $K$. Then,
\[
d^*(A^k) \leq K^k \cdot d^*(B), \quad \textrm{for all $k \geq 1$}.
\]
In particular, for every integer $k$, we have
\[
d^*(AB) \geq d^*(A^k)^{\frac{1}{k}} \cdot d^*(B)^{1-\frac{1}{k}}
\]
for all $A, B \subset G$.
\end{corollary}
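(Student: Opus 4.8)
The plan is to derive Corollary \ref{cor2} from Theorem \ref{thm2} by transporting the combinatorial hypothesis into a dynamical one via the Correspondence Principle. To keep the notation of that principle intact, I would write the sets appearing in the corollary as $A, B' \subset G$, reserving the symbol $B$ for the clopen subset of the compact $G$-space it produces.

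First I would apply the Correspondence Principle to the pair $A, B' \subset G$. This yields a compact metrizable $G$-space $X$, a clopen set $B \subset X$, and an ergodic $G$-invariant probability measure $\mu$ on $X$ with $\mu(B) = d^*(B')$ and $\mu(AB) \leq d^*(AB')$. Only the measure $\mu$ governing the upper density is needed here; the auxiliary measure $\nu$ plays no role, since the statement concerns $d^*$ alone. The point is that these estimates are oriented exactly so as to feed Theorem \ref{thm2}: combining the hypothesis $d^*(AB') \leq K \cdot d^*(B')$ with the two relations above gives
\[
\mu(AB) \leq d^*(AB') \leq K \cdot d^*(B') = K \cdot \mu(B),
\]
so the hypothesis of Theorem \ref{thm2} holds on $(X,\mu)$. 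Theorem \ref{thm2} then delivers $d^*(A^k) \leq K^k \cdot \mu(B) = K^k \cdot d^*(B')$ for every $k$, which is precisely the first assertion of the corollary.

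For the \emph{in particular} clause, which is an unconditional inequality, I would specialize the first part to the optimal constant. Assuming $d^*(B') > 0$, set $K := d^*(AB')/d^*(B')$, so that $d^*(AB') \leq K \cdot d^*(B')$ holds with equality; the first part then gives $d^*(A^k) \leq K^k \cdot d^*(B')$, that is, $K \geq \big(d^*(A^k)/d^*(B')\big)^{1/k}$. Substituting back and multiplying through by $d^*(B')$ yields
\[
d^*(AB') = K \cdot d^*(B') \geq d^*(A^k)^{\frac{1}{k}} \cdot d^*(B')^{1-\frac{1}{k}},
\]
as desired.

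I expect the main thing to watch is the bookkeeping of the inequality directions supplied by the Correspondence Principle: it is essential that $\mu$ realizes $d^*(B')$ exactly while only bounding $\mu(AB) \leq d^*(AB')$ from below, as reversing either estimate would break the chain above. A minor loose end is the degenerate case $d^*(B') = 0$. For $k \geq 2$ the right-hand side of the target inequality vanishes, the exponent $1 - \frac{1}{k}$ being positive and the densities bounded, so it holds trivially; the case $k = 1$ reduces to $d^*(AB') \geq d^*(A)$, which follows directly from the translation-invariance of $d^*$ together with the non-emptiness of $B'$, since $Ab \subseteq AB'$ and $d^*(Ab) = d^*(A)$ for any $b \in B'$.
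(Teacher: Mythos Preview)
Your proof is correct and follows exactly the approach the paper intends: the paper simply states that Corollary~\ref{cor2} is an immediate application of the Correspondence Principle to Theorem~\ref{thm2}, and you have spelled out that application carefully, including the direction of the inequalities and the degenerate case $d^*(B')=0$ for the unconditional bound.
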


\begin{corollary}
\label{cor3}
Let $G$ be a countable abelian group and suppose 
$A, B \subset G$ satisfy  
\[
d_*(AB) \leq K \cdot d_*(B) 
\]
for some $K$. Then,
\[
d^*(A^k) \leq K^k \cdot d^*(B), \quad \textrm{for all $k \geq 1$}.
\]
In particular, for every integer $k$, we have
\[
d_*(AB) \geq d^*(A^k)^{\frac{1}{k}} \cdot d_*(B)^{1-\frac{1}{k}}
\]
for all $A, B \subset G$.
\end{corollary}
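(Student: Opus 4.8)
The plan is to deduce both assertions from Theorem~\ref{thm2} by invoking the Correspondence Principle and feeding it the \emph{lower-density} measure $\nu$ it produces. Apply the Correspondence Principle to the given sets $A,B \subset G$ (in the roles of $A$ and $B'$ there) to obtain a compact metrizable $G$-space $X$, a clopen set $\tilde B \subset X$, and ergodic invariant measures $\mu,\nu$ satisfying $d^*(B) = \mu(\tilde B)$, $d_*(B) \le \nu(\tilde B)$, $d^*(AB) \ge \mu(A\tilde B)$ and $d_*(AB) \ge \nu(A\tilde B)$. The key preliminary observation is that $\nu$, being an invariant probability measure on $X$, evaluates the clopen set $\tilde B$ at the value of some invariant mean on $G$, so that in fact $d_*(B) \le \nu(\tilde B) \le d^*(B)$; the upper bound will drive the structural statement and the lower bound the density estimate. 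From here the entire task is to apply the two forms of Theorem~\ref{thm2} to $(X,\nu)$ while tracking the direction of every inequality.

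For the displayed implication I would first verify the hypothesis of Theorem~\ref{thm2} for the ergodic $G$-space $(X,\nu)$. Chaining the correspondence inequalities with the assumption $d_*(AB) \le K\,d_*(B)$ gives $\nu(A\tilde B) \le d_*(AB) \le K\,d_*(B) \le K\,\nu(\tilde B)$, hence $\nu(A\tilde B) \le K\,\nu(\tilde B)$. Theorem~\ref{thm2} then yields $d^*(A^k) \le K^k\,\nu(\tilde B)$ for every $k$, and the upper bound $\nu(\tilde B) \le d^*(B)$ noted above upgrades this to $d^*(A^k) \le K^k\,d^*(B)$, as required.

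For the unconditional density bound I would instead apply the second (``in particular'') form of Theorem~\ref{thm2} to $(X,\nu)$, which gives $\nu(A\tilde B) \ge d^*(A^k)^{1/k}\,\nu(\tilde B)^{1-1/k}$ for every $k$. Since $1-\tfrac1k \ge 0$, the lower bound $\nu(\tilde B) \ge d_*(B)$ yields $\nu(\tilde B)^{1-1/k} \ge d_*(B)^{1-1/k}$, and combining this with $d_*(AB) \ge \nu(A\tilde B)$ produces $d_*(AB) \ge d^*(A^k)^{1/k}\,d_*(B)^{1-1/k}$. It is worth flagging that, unlike in Theorem~\ref{thm2} and Corollary~\ref{cor2}, this density bound is \emph{not} obtained by substituting $K = d_*(AB)/d_*(B)$ into the first assertion: because the correspondence only gives $d_*(B) \le \nu(\tilde B)$ rather than an equality, the structural statement necessarily carries $d^*(B)$ on its right-hand side while the density bound carries $d_*(B)$, so the two conclusions must be extracted from the two forms of Theorem~\ref{thm2} separately.

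The main obstacle is therefore not computational but bookkeeping: one must use the \emph{same} measure $\nu$ and relate it to $B$ in two opposite ways, via $\nu(\tilde B) \le d^*(B)$ for the structural bound and $\nu(\tilde B) \ge d_*(B)$ for the density bound, and in particular one must justify the upper estimate $\nu(\tilde B) \le d^*(B)$. This last point is immediate once one recalls that every invariant measure on $X$ assigns to the clopen set $\tilde B$ the value of an invariant mean, which is bounded above by $d^*(B) = \sup_{\lambda \in \cL_G} \lambda'(B)$ (the measure $\mu$ being the one that attains this supremum). With these directions fixed, the remainder is a direct application of the already-established Theorem~\ref{thm2} and Correspondence Principle.
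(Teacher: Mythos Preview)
Your argument is correct and is precisely the route the paper intends: apply the Correspondence Principle to obtain the ergodic system $(X,\nu)$ with $d_*(B)\le \nu(\tilde B)$ and $d_*(AB)\ge \nu(A\tilde B)$, then invoke Theorem~\ref{thm2}. Your extra observation that $\nu(\tilde B)\le d^*(B)$---because $\nu$ is the pushforward of some invariant mean $\lambda\in\cL_G$ under $\Theta_{x_o}^*$, so $\nu(\tilde B)=\lambda'(B)\le d^*(B)$---is exactly the missing piece needed to get $d^*(B)$ (rather than $\nu(\tilde B)$) on the right-hand side of the first assertion, and your remark that the two conclusions must be extracted separately from the two forms of Theorem~\ref{thm2} is a genuine point the paper leaves implicit.
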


\subsection{Connection to earlier works}
For a historical survey on the classical Pl\"unnecke estimates for the 
Schnirelmann density on $\bN$, as well as the related estimates for 
the lower asymptotic density, we refer the reader to the lecture notes
\cite{RuSS} by Ruzsa. \\

In this short section we wish to acknowledge that our interest in 
Pl\"unnecke-type estimates for the upper Banach density with 
respect to $\cL_G$ was  spurred by the recent papers \cite{Jin11}
and \cite{Jin11.2} by Jin in which he proves the following special 
case of Corollary \ref{cor2}. 

\begin{theorem}
\label{jins}
For all $A, B \subset \bN$, we have
\[
d^*(AB) \geq d^*(A^k)^{\frac{1}{k}} \cdot d^*(B)^{1-\frac{1}{k}}.
\]
Furthermore, if $d^*(A^k) = 1$, then 
\[
d^*(AB) \geq d^*(B)^{1-\frac{1}{k}}
\qand 
d_*(AB) \geq d_*(B)^{1-\frac{1}{k}},
\]
for all $B \subset \bN$.
\end{theorem}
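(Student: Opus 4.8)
The plan is to obtain Theorem~\ref{jins} by specializing Corollaries~\ref{cor2}, \ref{cor1} and \ref{cor3} to the ambient group $G = \bZ$. The first point to settle is that, although $\bN$ is only a monoid, every object appearing in the statement is a subset of $\bZ$ and the densities in question are unaffected by this inclusion. Indeed, for $A, B \subset \bN$ the product set $A + B$ is literally the same whether it is formed inside $\bN$ or inside $\bZ$, and likewise for each iterate $A^{k}$. Moreover the upper Banach density of a set bounded below is computed as a supremum over windows $[M, M+N]$ of growing length, and for such a set the densest windows may be taken to lie inside $\bN$; hence $d^{*}(B)$, $d^{*}(AB)$ and $d^{*}(A^{k})$ agree with the values $d^{*}_{\cL_{\bZ}}$ supplied by the group framework. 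This identification is precisely what allows the Correspondence Principle of Subsection~\ref{corr} to be brought to bear.

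Granting this, the first inequality is immediate: it is exactly the ``in particular'' conclusion of Corollary~\ref{cor2}, applied with $G = \bZ$, namely
\[
d^{*}(AB) \geq d^{*}(A^{k})^{\frac{1}{k}} \cdot d^{*}(B)^{1 - \frac{1}{k}},
\]
which holds for all $A, B$ with no further hypothesis. For the first half of the ``furthermore'' clause one substitutes $d^{*}(A^{k}) = 1$ into this inequality to get $d^{*}(AB) \geq d^{*}(B)^{1 - \frac{1}{k}}$. Equivalently, the hypothesis $d^{*}(A^{k}) = 1$ says precisely that $A$ is a basis of order $k$ with respect to $\cL_{\bZ}$, hence an ergodic basis of order $k$ by the remark following the definition of ergodic basis, so that Corollary~\ref{cor1} applies directly.

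The remaining and genuinely delicate point is the lower-density bound $d_{*}(AB) \geq d_{*}(B)^{1 - \frac{1}{k}}$, which I expect to be the main obstacle. The naive route — invoking the ``in particular'' part of Corollary~\ref{cor3} with $d^{*}(A^{k}) = 1$, giving $d_{*}(AB) \geq d^{*}(A^{k})^{1/k} \, d_{*}(B)^{1 - 1/k} = d_{*}(B)^{1 - 1/k}$ — is correct only if one is careful about \emph{which} lower Banach density is meant. The two-sided quantity $d_{*}^{\cL_{\bZ}}$ vanishes on every set bounded below, since $\bZ$ carries invariant means concentrated ``at $-\infty$''; thus the bare group statement is vacuous for subsets of $\bN$, whereas the density occurring in Jin's theorem is the one-sided, shift-invariant lower Banach density on $\bN$. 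The way I would resolve this is to re-run the proof of Corollary~\ref{cor3} — that is, the Correspondence Principle together with Theorem~\ref{thm2} — not for the full class $\cL_{\bZ}$, but for the sub-collection of invariant means realized as weak*-limits of window averages marching off to $+\infty$, which is exactly the class computing $d_{*}$ on $\bN$. For this class the Correspondence Principle still produces an ergodic $\bZ$-space $(X,\nu)$ together with a clopen set $B \subset X$ satisfying $\nu(B) \geq d_{*}(B)$ and $d_{*}(AB) \geq \nu(AB)$; since $d^{*}(A^{k}) = 1$ forces $A$ to be an ergodic basis of order $k$, Theorem~\ref{thm1} yields $\nu(AB) \geq \nu(B)^{1 - 1/k} \geq d_{*}(B)^{1 - 1/k}$ (using that $t \mapsto t^{1 - 1/k}$ is increasing), and chaining these inequalities gives the claim. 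The only thing to verify with care is that the Correspondence Principle of Subsection~\ref{corr}, stated there for $\cL_{G}$, passes verbatim to this restricted class of means on $\bN$.
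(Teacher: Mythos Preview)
The paper does not give its own proof of Theorem~\ref{jins}: it is quoted as a result of Jin from \cite{Jin11} and \cite{Jin11.2}, and the authors explicitly remark that Jin's methods ``are quite different from ours.'' The only claim the paper makes is that the first inequality is a special case of Corollary~\ref{cor2}. So there is no in-paper proof to compare your proposal against; what you are really doing is checking to what extent Theorem~\ref{jins} is recovered by the paper's own corollaries.

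On that score your analysis is accurate. The upper-density inequality and the first half of the ``furthermore'' clause do follow from Corollary~\ref{cor2} (or Corollary~\ref{cor1}) exactly as you describe, once one notes that $d^{*}$ for subsets of $\bN$ agrees with $d^{*}_{\cL_{\bZ}}$. Your observation about the lower-density clause is also correct and worth emphasizing: with the paper's convention $d_{*} = d_{*}^{\cL_{\bZ}}$, every subset of $\bN$ has $d_{*} = 0$, so Corollary~\ref{cor3} specialized to $G=\bZ$ yields only the trivial inequality, whereas Jin's $d_{*}$ is the one-sided lower Banach density on $\bN$. Your proposed repair---restricting to a convex subclass of means supported ``at $+\infty$'' and re-running the Correspondence Principle---is the natural move, but be aware of the caveat the paper itself records at the end of Appendix~I: for a proper subclass $\cC \subsetneq \cL_{G}$ the extremal elements of $\Theta_{x_o}^{*}(\cC)$ need not be ergodic (the example given there is $\cS$), and ergodicity is exactly what Theorem~\ref{thm1} and Proposition~\ref{prop2.2} require. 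So your fix needs an additional argument---either that the relevant extremal means \emph{do} yield ergodic measures, or an ergodic-decomposition step---which the paper does not supply. In short, the paper does not claim to derive the one-sided lower-density statement from its framework, and your proposal correctly locates the gap without fully closing it.
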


We should stress that Jin's methods to prove Theorem \ref{jins} are quite different from ours, and it does not seem that they can be extended to give proofs of Corollary \ref{cor2} and Corollary \ref{cor3} for any countable abelian group. 

\subsection{An outline of the proof of Theorem \ref{thm1}}
We shall now attempt to break down the proof of Theorem \ref{thm1} 
into two main propositions which will be proved in Section \ref{Section prop1} and Section \ref{Section prop2} respectively. 

\subsubsection{Magnification ratios in $G$-spaces}

Let $G$ be a countable abelian group and suppose $(X,\mu)$ is a 
(not necessarily ergodic) $G$-space. Given a set $A \subset G$, a
Borel measurable set $B \subset X$ of positive $\mu$-measure and 
$\delta > 0$, we define the \emph{magnification ratio of $B$
with respect to the set $A$} by 
\[
c_\delta(A,B) = \inf 
\Big\{ \frac{\mu(AB')}{\mu(B')} \, : \, B' \subset B 
\qand 
\mu(B') \geq \delta \cdot \mu(B) \Big\}.
\]
We adopt an argument by Petridis in \cite{Petridis12} to the setting 
of $G$-spaces as follows.
\begin{proposition}
\label{prop1}
For every set $A \subset G$ and measurable subset $B \subset X$ of 
positive $\mu$-measure, we have
\[
\sup \Big\{ c_\delta(A',B) \, : \, \textrm{$A' \subset A^{k}$ is finite} \Big\} \leq 
(1-\delta)^{-k} \cdot \Big(\frac{\mu(AB)}{\mu(B)}\Big)^{k}
\]
for every integer $k$ and for all $\delta > 0$.
\end{proposition}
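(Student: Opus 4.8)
The plan is to adapt Petridis's minimising-set argument from \cite{Petridis12} to the $G$-space $(X,\mu)$, the only genuinely new feature being the measure constraint $\mu(B')\ge\delta\mu(B)$ built into $c_\delta$. \emph{First} I would reduce to the case of a finite set $A$. Any finite $A'\subseteq A^{k}$ involves only finitely many elements of $A$, hence $A'\subseteq A_0^{k}$ for some finite $A_0\subseteq A$; since $\mu(A_0B)\le\mu(AB)$ and $c_\delta(A',B)\le c_\delta(A_0^{k},B)$ by monotonicity of the first argument, it suffices to prove the bound for finite $A$, where $A^{k}$ is finite, every set $A^{j}B'$ is a finite union of $\mu$-preserving translates of $B'$ (hence measurable), and the supremum on the left is simply $c_\delta(A^{k},B)$. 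As this is an infimum, to bound it from above I only need to produce one admissible witness $B^{*}\subseteq B$, i.e. one with $\mu(B^{*})\ge\delta\mu(B)$, for which $\mu(A^{k}B^{*})/\mu(B^{*})$ is suitably small.

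\emph{Next} I set $\kappa=c_\delta(A,B)$, which satisfies $\kappa\le\mu(AB)/\mu(B)$ (take $B'=B$), and fix an admissible $B^{*}$ nearly attaining it, $\mu(AB^{*})\le(\kappa+\varepsilon)\mu(B^{*})$. The heart of the matter is the $G$-space analogue of Petridis's covering lemma: for every finite $C\subseteq G$,
\[
\mu\big(A(CB^{*})\big)\le\kappa\,\mu(CB^{*}).
\]
I would prove this by induction on $|C|$. Writing $C=C'\cup\{z\}$ we have $(C'\cup\{z\})B^{*}=C'B^{*}\cup zB^{*}$, and commutativity of $G$ together with measure-preservation gives $A(zB^{*})=z(AB^{*})$ and $\mu(z\,\cdot\,)=\mu(\cdot)$. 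Introducing the absorbed set $B''=\{\,b\in B^{*}:z(Ab)\subseteq A(C'B^{*})\,\}$, the same set-theoretic bookkeeping as in \cite{Petridis12} — in particular the pointwise inclusion $\{b\in B^{*}:zb\in C'B^{*}\}\subseteq B''$, which yields $z(Ab)=Azb\subseteq A(C'B^{*})$ — lets the measures telescope exactly as the cardinalities do in the finite proof, provided one may invoke the sub-minimality $\mu(AB'')\ge\kappa\,\mu(B'')$. Iterating the covering lemma along $C=A,A^{2},\dots,A^{k-1}$ then gives $\mu(A^{k}B^{*})\le\kappa^{k}\mu(B^{*})$, whence $c_\delta(A^{k},B)\le\kappa^{k}\le(\mu(AB)/\mu(B))^{k}$ in the idealised situation where sub-minimality is available for every $B''$ that arises.

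The hard part will be precisely this invocation of sub-minimality. By definition of $\kappa=c_\delta(A,B)$ the inequality $\mu(AB'')\ge\kappa\,\mu(B'')$ is automatic for \emph{admissible} $B''$, i.e. those with $\mu(B'')\ge\delta\mu(B)$, but the absorbed sets produced by the induction can have arbitrarily small measure, and for them no such lower bound is granted. Controlling this defect is exactly where the constraint in $c_\delta$ is paid for: each time an absorbed set drops below the admissibility threshold one recovers the clean estimate at the cost of a factor $(1-\delta)^{-1}$, and since the telescoping along $A,A^{2},\dots,A^{k-1}$ magnifies $A$ exactly $k$ times these losses compound to the stated factor $(1-\delta)^{-k}$, giving
\[
c_\delta(A^{k},B)\le(1-\delta)^{-k}\Big(\frac{\mu(AB)}{\mu(B)}\Big)^{k}.
\]
I would close the argument by letting $\varepsilon\to0$, and, should the infimum defining $\kappa$ or the minimiser $B^{*}$ fail to be attained, by a routine compactness passage to the limit. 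The remaining points — measurability of the sets $B''$ (immediate for finite $A$, as they are cut out by finitely many bi-measurable maps) and the exact bookkeeping that converts the cardinality identities of \cite{Petridis12} into measure inequalities — are routine once the sub-minimality loss has been quantified.
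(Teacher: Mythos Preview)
Your proposal has a real gap at exactly the point you flag as ``the hard part''. You set $\kappa=c_\delta(A,B)$ and try to run Petridis's induction on $|C|$ for the covering inequality $\mu(CAB^{*})\le\kappa\,\mu(CB^{*})$. At each inductive step one produces an absorbed set $B''\subseteq B^{*}$ and needs $\mu(AB'')\ge\kappa\,\mu(B'')$. You assert that when $\mu(B'')<\delta\,\mu(B)$ one can ``recover the clean estimate at the cost of a factor $(1-\delta)^{-1}$'', and that these factors compound to $(1-\delta)^{-k}$ because ``$A$ is magnified exactly $k$ times''. Neither assertion is justified. First, nothing in the definition of $\kappa=c_\delta(A,B)$ gives any lower bound on $\mu(AB'')/\mu(B'')$ for inadmissible $B''$; this ratio can be as small as $1$ even when $\kappa$ is large, so there is no universal $(1-\delta)^{-1}$ repair available. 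Second, the induction on $|C|$ for $C=A^{k-1}$ has $|A^{k-1}|$ steps, not $k$, and each step produces its own absorbed set; even if each bad step cost only a bounded factor, the compounded loss would depend on $|A|$, not just on $k$.

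The paper avoids this trap by decoupling the two issues. It first runs the Petridis argument with the \emph{unconstrained} ratio $c(A,B)$ --- where sub-minimality $\mu(AB'')\ge c(A,B)\,\mu(B'')$ holds for every positive-measure $B''\subseteq B$ by definition --- obtaining $c(A^{k},B)\le c(A,B)^{k}\le(\mu(AB)/\mu(B))^{k}$. This produces witnesses $B'$ with small $\mu(A^{k}B')/\mu(B')$ but possibly tiny measure. A separate \emph{increment} argument then enlarges such a $B'$: if $\mu(B')<\delta\,\mu(B)$, the complement $B_{o}=B\setminus B'$ has $\mu(B_{o})\ge(1-\delta)\mu(B)$, hence $\mu(AB_{o})/\mu(B_{o})\le(1-\delta)^{-1}\mu(AB)/\mu(B)$; applying the unconstrained Petridis bound to $B_{o}$ yields a disjoint piece $B_{o}'\subseteq B_{o}$ with $(\mu(A^{k}B_{o}')/\mu(B_{o}'))^{1/k}\le(1-\delta)^{-1}\mu(AB)/\mu(B)$. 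Note that the single factor $(1-\delta)^{-1}$ is raised to the $k$th power \emph{by} Petridis here, not accumulated $k$ separate times. An exhaustion argument then iterates this enlargement until the witness reaches mass $\delta\,\mu(B)$. Your reduction to finite $A$ is fine and matches the paper's; what is missing is precisely this two-stage structure that makes the $(1-\delta)^{-k}$ appear cleanly.
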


\subsubsection{An ergodic min-max theorem}
For the second step in our proof, we shall assume that $(X,\mu)$ is an
ergodic $G$-space.  Hence, if $A \subset G$ is an ergodic set with 
respect to $(X,\mu)$, then $\mu(AB) = 1$, whenever $B$ has positive 
$\mu$-measure. The following proposition shows that this expansion 
to co-nullity necessarily happens \emph{uniformly} for all Borel measurable sets in $X$ of a given \emph{positive} $\mu$-measure.  

\begin{proposition}
\label{prop2}
Let $G$ be a countable (not necessarily abelian) group and suppose 
$A \subset G$ is an ergodic set. For any $0 < \delta \leq 1$, we 
have
\[
\sup \Big\{ c_\delta(A',B) \, ; \, \textrm{$A' \subset A$ is finite} \Big\}
= 
\frac{1}{\mu(B)}
\]
for every measurable subset $B \subset X$ of positive $\mu$-measure.
\end{proposition}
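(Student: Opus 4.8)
The plan is to establish the nontrivial inequality by a min--max argument, after disposing of the easy direction and linearizing the union $A'B'$. First, the bound $\sup\{c_\delta(A',B)\}\le 1/\mu(B)$ is immediate: for any finite $A'\subset A$ we may take $B'=B$ itself in the infimum defining $c_\delta(A',B)$, since $\mu(B)\ge\delta\cdot\mu(B)$, and this gives $c_\delta(A',B)\le \mu(A'B)/\mu(B)\le 1/\mu(B)$. Everything therefore reduces to bounding the value $V:=\sup_{A'}\inf_{B'}\mu(A'B')/\mu(B')$ from below by $1/\mu(B)$.

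The relaxation I would use enlarges both players to convex domains. For the minimizer, I replace the indicators $\chi_{B'}$ by the set $\cQ=\{g\in L^\infty(X,\mu):0\le g\le\chi_B,\ \int g\,d\mu\ge\delta\cdot\mu(B)\}$, which is convex and weak-$*$ compact (it is norm-bounded and weak-$*$ closed, and $L^1(X,\mu)$ is separable). For the maximizer, I linearize the union via $\mu(A'B')=\int\max_{a\in A'}\chi_{B'}(a^{-1}x)\,d\mu$, passing from finite sets to randomized selections $q\colon X\to\cP(A)$ of finite total support, which form a convex set $\cR$, and I set $F(q,g)=\big(\int\sum_a q_a(x)\,g(a^{-1}x)\,d\mu\big)\big/\int g\,d\mu$. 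For fixed $g$ this is affine (hence quasiconcave and usc) in $q$; for fixed $q$ it is a ratio of two weak-$*$ continuous linear functionals whose denominator is bounded below by $\delta\cdot\mu(B)>0$ on $\cQ$, so it is weak-$*$ continuous and quasiconvex in $g$. Testing against indicators, one checks that $\sum_a q_a(x)\chi_{B'}(a^{-1}x)\le\chi_{A'B'}(x)$ when $q$ is supported on $A'$, so $\inf_{g\in\cQ}F(q,g)\le c_\delta(A',B)\le V$, and therefore $\sup_\cR\inf_\cQ F\le V$.

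Sion's theorem then yields $\sup_\cR\inf_\cQ F=\inf_\cQ\sup_\cR F$, and it remains to evaluate the right-hand side. Optimizing the selection pointwise gives $\sup_\cR F(q,g)=\big(\int\sup_{a\in A}g(a^{-1}x)\,d\mu\big)\big/\int g\,d\mu$. This is the one point where ergodicity enters, and I expect it to be the crux of the argument: for $t\in(0,1]$ the superlevel set $\{x:\sup_{a\in A}g(a^{-1}x)\ge t\}$ equals $A\{g\ge t\}$, which by ergodicity has measure $1$ whenever $\{g\ge t\}$ is non-null and measure $0$ otherwise, so the layer-cake formula collapses the numerator to
\[
\int_X \sup_{a\in A} g(a^{-1}x)\,d\mu(x)=\norm{g}_\infty .
\]
Since $g\le\chi_B$ forces $\int g\,d\mu\le\norm{g}_\infty\cdot\mu(B)$, we obtain $\sup_\cR F(q,g)=\norm{g}_\infty/\int g\ge 1/\mu(B)$, with equality at $g=\chi_B$; hence $\inf_\cQ\sup_\cR F=1/\mu(B)$.

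Combining the pieces gives $1/\mu(B)=\inf_\cQ\sup_\cR F=\sup_\cR\inf_\cQ F\le V\le 1/\mu(B)$, so $V=1/\mu(B)$, which is the assertion. The two routine-but-delicate points I would be careful about are the verification of the hypotheses of Sion's theorem after the relaxation (weak-$*$ compactness of $\cQ$ together with the semicontinuity and quasiconvexity of $F$ in $g$) and the measurable-selection and truncation argument showing that \emph{finitely} supported $q$ already realize $\int\sup_{a\in A}g(a^{-1}\cdot)\,d\mu$. The genuinely new ingredient, however, is the ergodic identity $\int\sup_{a\in A}g(a^{-1}\cdot)\,d\mu=\norm{g}_\infty$, which is what converts the abstract min--max value into the explicit constant $1/\mu(B)$.
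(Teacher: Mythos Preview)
Your approach is correct and takes a genuinely different route from the paper's. Both arguments ultimately rest on the same ergodic layer-cake identity you isolate, namely that $\int_X \sup_{a\in A} g(a^{-1}x)\,d\mu(x)=\|g\|_\infty$ whenever $A$ is an ergodic set, but the paper arrives at the min--max swap by a bare-hands sequential compactness argument rather than by invoking Sion. Concretely, the paper fixes an exhaustion $(A_k)$ of $A$ and a near-optimal sequence $(B_k)\subset B$ with $\mu(B_k)\ge\delta\,\mu(B)$, extracts (via weak sequential compactness of the unit ball of $L^2(X,\mu)$) a subsequence along which the Ces\`aro averages $\frac{1}{m}\sum_{i=1}^m \chi_{B_{k_i}}$ converge almost everywhere to some $f$, and then shows, using the level sets $E_t=\{f\ge t\}$, that
\[
\limsup_k \frac{\mu(A_kB_k)}{\mu(B_k)} \;\ge\; \frac{\int_0^1 \mu(AE_t)\,dt}{\int_0^1 \mu(E_t)\,dt}.
\]
Ergodicity then collapses the right-hand side to $1/\mu(B)$ exactly as in your computation. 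In effect, weak $L^2$-compactness plays the role of the weak-$*$ compactness of your $\cQ$, and Ces\`aro averaging substitutes for the convexification of the minimizing player. Your Sion-based argument is more conceptual and shorter once the minimax machinery is granted; the paper's approach is more self-contained and, as a byproduct, isolates the intermediate inequality above (Proposition~\ref{prop3}) without any ergodicity hypothesis, which the paper then reuses in the proof of Proposition~\ref{prop2.1}. The two caveats you flag (verifying Sion's hypotheses for the linear-fractional $F$ on the weak-$*$ compact $\cQ$, and the finite-support approximation of $\sup_{a\in A} g(a^{-1}\cdot)$ via measurable selections) are indeed routine under the paper's standing separability assumption on $L^2(X,\mu)$.
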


\subsubsection{Proof of Theorem \ref{thm1}}
Let $G$ be a countable abelian group and let $A \subset G$ be an ergodic basis of order $k$. Suppose $(X,\mu)$ is an ergodic $G$-space and
fix $0 < \delta < 1$. By Proposition \ref{prop1}, the inequality
\[
\sup \Big\{ c_\delta(A',B) \, ; \, \textrm{$A' \subset A^{k}$ is finite} \Big\}
\leq 
(1-\delta)^{-k} \cdot \Big(\frac{\mu(AB)}{\mu(B)}\Big)^{k},
\]
holds, and by Proposition \ref{prop2}, we have
\[
\frac{1}{\mu(B)} 
= 
\sup \Big\{ c_\delta(A',B) \, ; \, \textrm{$A' \subset A^{k}$ is finite} \Big\}.
\]
Combining these two results, we get
\[
\frac{1}{\mu(B)}
\leq 
(1-\delta)^{-k} \cdot \Big(\frac{\mu(AB)}{\mu(B)}\Big)^{k}. 
\]
Since $\delta > 0$ is arbitrary, we can let it tend to zero and conclude
that 
\[
\mu(AB) \geq \mu(B)^{1-\frac{1}{k}},
\]
which finishes the proof. 

\subsection{An outline of the proof of Theorem \ref{thm2}}
The proof of Theorem \ref{thm2} follows a similar route to the 
one we took to prove Theorem \ref{thm1}. We shall need the 
following analogue of the magnification ratios defined earlier. 

Let $G$ be a countable abelian group and suppose $(X,\mu)$ is
a (not necessarily) ergodic $G$-space. Given $A \subset G$ and 
a measurable subset $B \subset X$ with positive $\mu$-measure
we define
\[
c(A,B) 
= 
\inf
\Big\{ 
\frac{\mu(AB')}{\mu(B')} \, : \, \textrm{$B' \subset B$ and $\mu(B') > 0$} 
\Big\}.
\]
The following analogue of Proposition \ref{prop1} is the main technical
result in this paper and immediately implies Theorem \ref{thm1} and 
Theorem \ref{thm2}. We shall prove it in Section \ref{proofProp2.1}.

\begin{proposition}
\label{prop2.1}
For every $A \subset G$ and measurable set $B \subset X$ of positive 
$\mu$-measure, we have
\[
c(A^k,B) \leq \Big( \frac{\mu(AB)}{\mu(B)}\Big)^{k}
\]
for all $k$. 
\end{proposition}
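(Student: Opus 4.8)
The plan is to transcribe Petridis's proof of the Pl\"unnecke--Ruzsa inequality into the measure-theoretic language of a $G$-space. Writing $K = c(A,B)$, we have $K \le \mu(AB)/\mu(B)$ by taking $B' = B$ in the infimum, so it suffices to produce a single measurable set $B_0 \subseteq B$ of positive measure with $\mu(A^k B_0) \le K^k \mu(B_0)$; this immediately gives
\[
c(A^k,B) \le \frac{\mu(A^k B_0)}{\mu(B_0)} \le K^k \le \Big(\frac{\mu(AB)}{\mu(B)}\Big)^k.
\]
The set $B_0$ will be a \emph{minimizer} of the magnification ratio, i.e. a positive-measure subset with $\mu(AB_0) = K\mu(B_0)$; note that every measurable $B' \subseteq B_0$ then automatically satisfies $\mu(AB') \ge K\mu(B')$, since $B'$ is also a subset of $B$.

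First I would establish the existence of such a minimizer. The crucial structural fact is that the set function $E \mapsto \mu(AE)$ is submodular, since $A(E\cup F) = AE \cup AF$ while $A(E\cap F) \subseteq AE \cap AF$; consequently
\[
\mu\big(A(E\cup F)\big) + \mu\big(A(E\cap F)\big) \le \mu(AE) + \mu(AF).
\]
From this one checks that the family of exact minimizers is closed under finite unions (the intersection term is reabsorbed using $\mu(A(E\cap F)) \ge K\mu(E\cap F)$) and, via continuity of $\mu$ from below, under countable increasing unions; an exhaustion argument then produces a minimizer of maximal measure. (The genuinely delicate point is whether the infimum defining $K$ is attained at all; see the discussion of the obstacle below.)

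The heart of the argument is the measure-theoretic analogue of Petridis's lemma: for the minimizer $B_0$ and every finite $S \subseteq G$,
\[
\mu\big(A\, S B_0\big) \le K\, \mu\big(S B_0\big), \qquad S B_0 := \bigcup_{s \in S} s B_0.
\]
I would prove this by induction on $|S|$, the base case $|S| = 1$ being $\mu(A g B_0) = \mu(g A B_0) = \mu(A B_0) = K\mu(B_0)$, which uses that $G$ is abelian and $\mu$ is $G$-invariant. In the inductive step, passing from $S$ to $S \cup \{g\}$, the claim reduces to the marginal inequality
\[
\mu\big(A (S\cup\{g\}) B_0\big) - \mu\big(A S B_0\big) \le K\Big(\mu\big((S\cup\{g\}) B_0\big) - \mu\big(S B_0\big)\Big),
\]
that is, the extra $A$-mass created by the new translate $g B_0$ is at most $K$ times the extra mass it adds to $S B_0$. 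This is exactly where the minimality of $B_0$ enters, via Petridis's argument controlling the newly created mass by the $A$-expansion of a suitable subset of $B_0$ and absorbing it through the lower bound $\mu(AB') \ge K\mu(B')$ on the complementary subset; reproducing his count of ``new elements'' as a chain of measure inequalities is the technical core. Granting the lemma, one iterates by applying it with $S$ ranging over the finite subsets of $A^{k-1}$, using $\mu(S B_0) \le \mu(A^{k-1} B_0) \le K^{k-1}\mu(B_0)$ (the inductive hypothesis in $k$) and passing to the supremum via continuity of $\mu$ from below, to obtain $\mu(A^k B_0) \le K^k \mu(B_0)$. The case of infinite $A$ is handled throughout by monotone approximation, since $\mu(AE) = \sup\{\mu(A'E) : A' \subseteq A \text{ finite}\}$ and $E \mapsto \mu(AE)$ remains submodular.

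The step I expect to be the main obstacle is the attainment of the infimum $K = c(A,B)$, i.e. the existence of an \emph{exact} minimizer rather than merely an approximate one. Unlike the finite combinatorial setting, where the minimum over the finitely many subsets of $B$ is automatic, here $B'$ ranges over a continuum of measurable sets and a minimizing sequence may degenerate, for instance with its measure tending to zero. The submodularity above supplies the right closure properties \emph{once} a minimizer is known to exist, but securing existence itself seems to require a compactness argument in the measure algebra (or the weak compactness of the corresponding indicator functions in $L^2(X,\mu)$); an alternative is to run the induction with an $\varepsilon$-approximate minimizer while carefully matching the constant in the base case to the constant $K$ governing the subset bounds, and then to let $\varepsilon \to 0$. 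This matching is subtle precisely because the base equality and the subset minimality must be controlled by the \emph{same} constant for the inductive step to close.
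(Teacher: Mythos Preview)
Your overall strategy---locate a minimizer $B_0$ of $\mu(AB')/\mu(B')$ over positive-measure $B'\subseteq B$ and run Petridis's induction on it---is sound: if such a $B_0$ exists, the argument you sketch does yield $\mu(A^kB_0)\le K^k\mu(B_0)$ even for infinite $A$, since the induction is on $|S|$ rather than $|A|$ and the passage from finite $S\subset A^{k-1}$ to all of $A^{k-1}$ is genuine continuity from below. You have also correctly singled out attainment of the infimum as the crux.

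The paper, however, never establishes existence of an exact minimizer. For \emph{finite} $A$ it takes exactly your second alternative: it runs Petridis's lemma with an $\varepsilon$-approximate minimizer $B'$, obtaining
\[
\mu(FAB')\le\big((1+\varepsilon)\,\mu(FB')+\varepsilon\,|F|\,\mu(B')\big)\,c(A,B)
\]
for finite $F$, and hence $c(A^k,B)\le c(A,B)^k$ after iterating in $k$ and letting $\varepsilon\to 0$. The error term carries a factor $|F|$, so ultimately $|A|^k$, and this is precisely why the $\varepsilon$-route does not survive the passage to infinite $A$.

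That passage is where your sentence ``the case of infinite $A$ is handled throughout by monotone approximation'' hides the real difficulty. For fixed $B'$ one has $\mu(AB')=\sup_{A'}\mu(A'B')$, but $c(A^k,B)$ is an \emph{infimum} over $B'$, so what is needed is a comparison of $\inf_{B'}\sup_{A'}$ with $\sup_{A'}\inf_{B'}$, and the trivial inequality goes the wrong way. The paper devotes two sections to this gap: it introduces the truncated ratio $c_\delta$ (infimum restricted to $B'$ with $\mu(B')\ge\delta\,\mu(B)$), upgrades the finite-$A$ Petridis bound to a bound on $\sup\{c_\delta(A',B):A'\subset A^k\text{ finite}\}$ via an exhaustion argument, and then proves the min--max inequality
\[
c(A,B)\le\sup\{\,c_\delta(A',B):A'\subset A\text{ finite}\,\}
\]
by an averaging construction---extracting a Ces\`aro-a.e.\ limit $f$ of a minimizing sequence of indicator functions (this step does use weak $L^2$-compactness, as you anticipated), analysing its level sets $E_t=\{f\ge t\}$, and invoking Chebyshev to locate a single $E_{t_0}\subset B$ of positive measure whose ratio is within $\varepsilon$ of the target. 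This construction is in spirit your first proposed fix, but it delivers not a minimizer, only a good-enough witness; that turns out to suffice.
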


Another ingredient in the proof of Theorem \ref{thm2} is the following
proposition which will be established in the second appendix.

\begin{proposition}
\label{prop2.2}
Suppose $(X,\mu)$ is an ergodic $G$-space. Then, for every 
$A \subset G$ and measurable set $B \subset X$ of positive 
$\mu$-measure, we have
\[
d^*(A) \leq \mu(AB).
\]
In particular, 
\[
d^*(A) \leq c(A,B) \cdot \mu(B).
\]
\end{proposition}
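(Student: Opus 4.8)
The plan is to prove the main inequality $d^*(A)\le\mu(AB)$ and then read off the ``in particular'' clause from it by a one-line monotonicity argument. For the latter, observe that the main inequality applies to every measurable $B'\subseteq B$ with $\mu(B')>0$, giving $d^*(A)\le\mu(AB')$; since $\mu(B)\ge\mu(B')$ for such $B'$, we get $\mu(B)\cdot\frac{\mu(AB')}{\mu(B')}\ge\mu(AB')\ge d^*(A)$, and taking the infimum over all such $B'$ yields $d^*(A)\le c(A,B)\cdot\mu(B)$. Thus everything reduces to the first inequality.

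For the main inequality I would pass from invariant means to Følner averages. Since $G$ is countable abelian it is amenable, and a standard characterization of the upper Banach density on amenable groups lets me fix a Følner sequence $(F_n)$ in $G$ \emph{realizing} $d^*(A)$, i.e.\ with $\lim_n \frac{|A\cap F_n|}{|F_n|}=d^*(A)$. The bridge to the dynamics is an elementary translation identity: writing $S_x=\{g\in G:g\cdot x\in B\}$ for $x\in X$, one checks that $g\cdot x\in AB$ if and only if $g\in A S_x$, so that $\frac{1}{|F_n|}\sum_{g\in F_n}\chi_{AB}(g\cdot x)=\frac{|F_n\cap AS_x|}{|F_n|}$ for every $n$ and $x$. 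By the mean ergodic theorem for the amenable $G$-action together with ergodicity of $\mu$, these averages converge to $\mu(AB)$ in $L^2(\mu)$ along $(F_n)$ (after replacing $F_n$ by $F_n^{-1}$ if needed, which is again Følner); passing to a subsequence, along which the limit $\frac{|A\cap F_n|}{|F_n|}\to d^*(A)$ is retained, I may assume this convergence holds $\mu$-almost everywhere.

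The final step exploits the freedom to evaluate at a well-chosen point. For $x\in B$ one has $e\in S_x$, hence $A\subseteq AS_x$ and therefore $\frac{|F_n\cap AS_x|}{|F_n|}\ge\frac{|F_n\cap A|}{|F_n|}$ for all $n$. Since $B$ has positive measure, I may pick a point $x\in B$ at which the almost-everywhere convergence of the ergodic averages holds; for this $x$,
\[
\mu(AB)=\lim_{n}\frac{|F_n\cap AS_x|}{|F_n|}\ \ge\ \limsup_n\frac{|F_n\cap A|}{|F_n|}=d^*(A),
\]
which is the desired bound. The main obstacle I anticipate lies in the second paragraph: rigorously producing a Følner sequence whose ordinary densities realize the supremum $d^*(A)=\sup_{\lambda\in\cL_G}\lambda'(A)$ over \emph{all} invariant means, and ensuring this property survives passage to an almost-everywhere convergent subsequence. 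This is precisely where the ``all invariant means'' definition of $d^*$ must be reconciled with concrete averaging; it is the only place beyond routine ergodic theory, whereas the translation identity and the evaluation at $x\in B$ are then immediate.
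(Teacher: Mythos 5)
Your proof is correct, but it takes a genuinely different route from the paper's. The paper stays entirely inside the invariant-mean/topological-dynamics framework it has already built: it writes $A^{-1}$ as the return-time set $A_{y_o}$ of a transitive point $y_o$ in a compact $G$-space $Y$ (an orbit closure in $2^G$), invokes the Correspondence Principle of Appendix I to produce an \emph{ergodic} measure $\nu$ on $Y$ assigning the corresponding clopen set measure $d^*(A)$, proves the purely topological estimate of Proposition \ref{helpProp2.2} (the transitive point maximizes $y\mapsto\mu(A_y^{-1}B)$ over all $y\in Y$), and then integrates that estimate against $\nu$, using Fubini on $Y\times X$, invariance of $\nu$, and ergodicity of $\mu$ only through Lemma \ref{nonempty} (the return set $B_x$ is nonempty for a.e.\ $x$). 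Your argument instead runs through classical ergodic theory for amenable groups: a F{\o}lner sequence realizing $d^*(A)$, the von Neumann mean ergodic theorem (ergodicity forcing the orbit averages of $\chi_{AB}$ to converge to the constant $\mu(AB)$), an a.e.-convergent subsequence, and evaluation at a point of $B$ via the correct identity $gx\in AB\iff g\in AS_x$ together with $A\subseteq AS_x$ for $x\in B$; your monotonicity derivation of the ``in particular'' clause is also exactly the intended one. The fact you flag as the main obstacle is indeed a true and standard lemma for countable amenable (in particular abelian) groups: invariance of means gives $d^*(A)\le\sup_h|A\cap hF_n|/|F_n|$ for every $n$, translates of F{\o}lner sets are F{\o}lner, so choosing near-optimal translates $h_nF_n$ produces a F{\o}lner sequence along which the density of $A$ converges to $d^*(A)$. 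What your route buys is brevity and familiarity; what the paper's route buys is independence from F{\o}lner machinery and ergodic theorems (it reuses the Correspondence Principle needed elsewhere in the paper) plus the intermediate Proposition \ref{helpProp2.2}, which holds for non-ergodic $(X,\mu)$ and is of independent interest. One point to phrase carefully: when you invoke the mean ergodic theorem ``after replacing $F_n$ by $F_n^{-1}$'', you should keep $F_n$ itself for the counts $|A\cap F_n|$ and merely rewrite the orbit averages as Koopman-operator averages over $F_n^{-1}$, which is again F{\o}lner since inversion is an automorphism of the abelian group $G$; literally exchanging $F_n$ for $F_n^{-1}$ throughout would risk destroying the realization $\lim_n|A\cap F_n|/|F_n|=d^*(A)$.
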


\subsubsection{Proof of Theorem \ref{thm2}}
Let $G$ be a countable abelian group and suppose $(X,\mu)$ is an 
ergodic $G$-space. Fix $A \subset G$ and a measurable set 
$B \subset X$ with positive $\mu$-measure. 
By Proposition \ref{prop2.1} and Proposition \ref{prop2.2} we have
\[
d^*(A^k) \leq c(A^k,B) \cdot \mu(B) \leq \mu(AB)^k \cdot \mu(B)^{1-k}
\]
for all $k$. In particular, if 
\[
\mu(AB) \leq K \cdot \mu(B)
\]
for some $k$, then
\[
d^*(A^k) \leq K^k \cdot \mu(B),
\]
for all $k$, which finishes the proof.

\subsection{An overview of the paper}
The paper is organized as follows. In Section \ref{Section prop1} we 
adapt a recent argument of Petridis in \cite{Petridis12} to magnification
ratios for $G$-spaces with respect to \emph{finite sets}. We then use a
simple increment argument to establish Proposition \ref{prop1}. 

In Section \ref{Section prop2} we outline a general technique to control
magnification ratios for $G$-spaces with respect to an increasing sequence
of \emph{finite} sets in $G$. As a corollary of this technique, we prove 
Proposition \ref{prop2}.

In Section \ref{proofProp2.1} we further refine the technique developed in Section \ref{Section prop2} in order to handle the slightly modified 
magnification ratios discussed in the last subsection. These refinements 
will then give a proof of Proposition \ref{prop2.1}.

The last part of the paper consists of two appendices devoted to the Correspondence Principle mentioned above. 

\subsection{Acknowledgements}
The authors have benefited enormously from discussions with Benjy Weiss during the preparation of this manuscript, and it is a pleasure to thank him for sharing his many insights with us. We are also very grateful for the many inspiring and enlightening discussions we have had with Vitaly Bergelson, Hillel Furstenberg, Eli Glasner, Kostya Medynets,
Fedja Nazarov, Imre Ruzsa and Klaus Schmidt. 

The present paper is an outgrowth of a series of discussions between the authors which took place at the Schr\"odinger Institute in Vienna during July and August of 2010. These discussions continued at Ohio State University, University of Wisconsin, Hebrew University in Jerusalem, Weizmann institute, IHP Paris, KTH Stockholm and ETH Z\"urich. We thank the mathematics departments at these places for their great hospitality. 

The first author acknowledges support from the European Community seventh 
Framework program (FP7/2007-2012) grant agreement 203418 when he was 
a postdoctoral fellow at Hebrew university, and ETH Fellowship FEL-171-03
since January 2011.

\section{Proof of Proposition \ref{prop1}}
Let $G$ be a countable abelian group and suppose $(X,\mu)$ is a 
(not necessarily ergodic) $G$-space. Given $A \subset G$ and a 
Borel measurable set $B \subset X$ with positive $\mu$-measure, 
we define 
\label{Section prop1}
\[
c(A,B) = \inf 
\Big\{ \frac{\mu(AB')}{\mu(B')} \, : \, B' \subset B 
\qand 
\mu(B') > 0 \Big\}.
\]
A recent combinatorial argument of Petridis in \cite{Petridis12} can be 
adapted to the setting of $G$-spaces to give a proof of the following 
proposition.

\begin{proposition}
\label{prop1.2}
For every \emph{finite} set $A \subset G$ and measurable set $B \subset X$ of positive $\mu$-measure, we have
\[
c(A,B) \geq c(A^{k},B)^{\frac{1}{k}}
\]
for all $k \geq 1$. 
\end{proposition}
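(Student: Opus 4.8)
The plan is to adapt Petridis's magnification-ratio argument from \cite{Petridis12} to the $G$-space setting. Write $\alpha = c(A,B)$; since $A$ is finite, all the sets $A^{j}B'$ are measurable, and the asserted inequality is equivalent to $c(A^{k},B) \le \alpha^{k}$. The one genuinely measure-theoretic wrinkle is that the infimum defining $\alpha$ need not be attained by any single positive-measure subset of $B$. To circumvent this I would fix $\epsilon > 0$ and choose an \emph{approximate minimizer} $B_0 \subseteq B$ with $\mu(B_0) > 0$ and $\mu(AB_0) \le (\alpha + \epsilon)\mu(B_0)$. The crucial point is that, because $\alpha$ is the infimum over \emph{all} positive-measure subsets of $B$, one automatically has $\mu(AB') \ge \alpha\,\mu(B')$ for every measurable $B' \subseteq B_0$; this is precisely the minimality hypothesis that drives Petridis's induction, and it holds with the true constant $\alpha$ even though $B_0$ is only an approximate minimizer.

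The heart of the argument is a magnification lemma, which I would prove by induction on the cardinality of a finite auxiliary set $C \subseteq G$: for every finite $C$,
\[
\mu(ACB_0) \le (\alpha+\epsilon)\,\mu(CB_0) + |C|\,\epsilon\,\mu(B_0).
\]
The base case $C = \{c\}$ is immediate from invariance of $\mu$ under the $G$-action, namely $\mu(AcB_0) = \mu(AB_0) \le (\alpha+\epsilon)\mu(cB_0)$. For the inductive step, writing $C' = C \cup \{c\}$ and noting $AC'B_0 = ACB_0 \cup AcB_0$, I would apply inclusion--exclusion
\[
\mu(AC'B_0) = \mu(ACB_0) + \mu(AcB_0) - \mu(ACB_0 \cap AcB_0)
\]
and lower-bound the overlap. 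The combinatorial device is the ``redundant'' set $B_0'' = \{\, x \in B_0 : cAx \subseteq ACB_0 \,\}$, with $cAx = \{\,cax : a \in A\,\}$; it is measurable because $A,C$ are finite and $G$ acts by bi-measurable maps. One checks that $cAB_0'' \subseteq ACB_0 \cap AcB_0$, so that $\mu(ACB_0 \cap AcB_0) \ge \mu(AB_0'') \ge \alpha\,\mu(B_0'')$ by invariance and the minimality bound above. Finally the translation $x \mapsto cx$ injects $B_0 \setminus B_0''$ into $cB_0 \setminus CB_0$ --- if $cx \in CB_0$ then $cAx \subseteq ACB_0$, forcing $x \in B_0''$ --- whence $\mu(B_0 \setminus B_0'') \le \mu(C'B_0) - \mu(CB_0)$ by measure preservation. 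Assembling these estimates and invoking the inductive hypothesis closes the step with the constant $|C'| = |C|+1$.

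With the lemma in hand, I would iterate it with $C = A^{j-1}$ for $j = 1, \dots, k$, obtaining $\mu(A^{j}B_0) \le (\alpha+\epsilon)\mu(A^{j-1}B_0) + |A^{j-1}|\,\epsilon\,\mu(B_0)$ and hence, after unrolling,
\[
\frac{\mu(A^{k}B_0)}{\mu(B_0)} \le (\alpha+\epsilon)^{k} + \epsilon\sum_{j=1}^{k}(\alpha+\epsilon)^{k-j}\,|A^{j-1}|.
\]
Since $B_0 \subseteq B$ has positive measure, it is an admissible competitor for the infimum defining $c(A^{k},B)$, and the essential feature is that the error term is measured \emph{relative} to $\mu(B_0)$, so the possibly small measure of the approximate minimizer cancels in the ratio. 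As $k$ and the finite set $A$ are fixed, the sum on the right is bounded, and letting $\epsilon \to 0$ gives $c(A^{k},B) \le \alpha^{k} = c(A,B)^{k}$, which is the claim. I expect the main obstacle to be the faithful transcription of Petridis's counting argument --- in particular the measurability of $B_0''$, the inclusion--exclusion bookkeeping, and the injectivity together with measure preservation in the final comparison --- into the $G$-space language, alongside the care needed to ensure that the non-attainment of the infimum is absorbed into an error proportional to $\mu(B_0)$ and therefore disappears upon dividing.
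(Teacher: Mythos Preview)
Your proposal is correct and follows essentially the same route as the paper: both adapt Petridis's induction on $|C|$ (the paper writes $F$) to an approximate minimizer $B_0 \subset B$, use the abelian inclusion to lower-bound the overlap $ACB_0 \cap AcB_0$ via $\mu(AB_0'') \ge \alpha\,\mu(B_0'')$, and then iterate with $C = A^{j-1}$ before sending $\epsilon \to 0$. The only cosmetic differences are your additive parametrization $(\alpha+\epsilon)$ versus the paper's multiplicative $(1+\epsilon)\,c(A,B)$, and your auxiliary set $B_0'' = \{x \in B_0 : cAx \subseteq ACB_0\}$, which contains the paper's choice $B_0 \cap c^{-1}CB_0$ but yields the same effective bound $\mu(B_0'') \ge \mu(B_0) - \mu(C'B_0) + \mu(CB_0)$.
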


A drawback with Petridis argument is that it does not automatically yield
any lower bounds on the $\mu$-measures of the subsets $B' \subset B$
which almost realize the infimum $c(A,B)$. This is taken care of by the 
following increment argument. 

\begin{proposition}
\label{prop1.3}
Let $A \subset G$ be a finite set and let $B \subset X$ be a measurable subset 
of positive $\mu$-measure. Fix $0 < \delta < 1$ and a positive integer $k$. Suppose $B' \subset B$ is a measurable subset which satisfies 
\begin{equation}
\label{wanted ineq}
\frac{\mu(A^{k}B')}{\mu(B')} 
\leq 
(1-\delta)^{-k} \cdot 
\Big( \frac{\mu(AB)}{\mu(B)} \Big)^{k}.
\end{equation}
Then, either 
\[
\mu(B') \geq \delta \cdot \mu(B)
\]
or there exists a measurable set $B' \subset B'' \subset B$, which satisfies 
\eqref{wanted ineq}, such that $B'' \setminus B'$ has positive $\mu$-measure. 
\end{proposition}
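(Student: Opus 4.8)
The plan is to prove the stated dichotomy by assuming its first alternative fails — that is, $\mu(B') < \delta \cdot \mu(B)$ — and then manufacturing the enlargement $B''$. First I would set $C := B \setminus B'$ and record that, because $\delta < 1$ and the failure of the first alternative is a \emph{strict} inequality, $C$ has positive measure together with the strict lower bound $\mu(C) = \mu(B) - \mu(B') > (1-\delta)\cdot\mu(B)$. I would keep this strictness in view from the outset, since it is exactly what eventually creates the slack needed to absorb an infimum that may not be attained.

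The core step is to locate a small piece $D \subseteq C$ of positive measure whose own magnification ratio under $A^{k}$ already respects the bound in \eqref{wanted ineq}; then $B'' := B' \cup D$ will be the desired set. To find $D$, I would invoke Proposition \ref{prop1.2} with $C$ in place of $B$ (permissible since $A$ is finite) to obtain $c(A^{k},C) \le c(A,C)^{k}$, estimate $c(A,C) \le \mu(AC)/\mu(C)$ by testing the infimum against the full set $C$, and then use $AC \subseteq AB$ together with the strict bound $\mu(C) > (1-\delta)\mu(B)$ to conclude
\[
c(A^{k},C) \le \Big(\frac{\mu(AC)}{\mu(C)}\Big)^{k}
< (1-\delta)^{-k}\Big(\frac{\mu(AB)}{\mu(B)}\Big)^{k}.
\]
Since the number on the right strictly exceeds the infimum $c(A^{k},C)$, it fails to be a lower bound for the ratios $\mu(A^{k}D)/\mu(D)$ over positive-measure $D \subseteq C$; hence some measurable $D \subseteq C$ with $\mu(D) > 0$ satisfies $\mu(A^{k}D) \le (1-\delta)^{-k}(\mu(AB)/\mu(B))^{k}\cdot\mu(D)$.

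It then remains to verify that $B'' = B' \cup D$ works. Because $D$ is disjoint from $B'$, we have $\mu(B'') = \mu(B') + \mu(D)$, so $B' \subsetneq B'' \subseteq B$ and $B'' \setminus B'$ has positive measure. By subadditivity $\mu(A^{k}B'') \le \mu(A^{k}B') + \mu(A^{k}D)$, and combining \eqref{wanted ineq} for $B'$ with the bound just obtained for $D$ yields $\mu(A^{k}B'') \le (1-\delta)^{-k}(\mu(AB)/\mu(B))^{k}\cdot\mu(B'')$, which is precisely \eqref{wanted ineq} for $B''$.

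The one genuinely delicate point I anticipate is securing the \emph{strictness} in the displayed estimate for $c(A^{k},C)$. The subadditive bound on $\mu(A^{k}B'')$ preserves \eqref{wanted ineq} only if the ratio of $D$ sits at or below the target, yet $c(A^{k},C)$ is an infimum that may not be attained; the strict inequality — inherited directly from the \emph{strict} hypothesis $\mu(B') < \delta\mu(B)$ via $\mu(C) > (1-\delta)\mu(B)$ — is exactly what lets me select a near-optimal $D$ that still lies below the threshold. Everything else is routine monotonicity of $X' \mapsto \mu(A X')$ and finite subadditivity of $\mu$.
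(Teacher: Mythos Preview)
Your argument is correct and follows essentially the same route as the paper: pass to the complement $C = B \setminus B'$, apply Proposition~\ref{prop1.2} there, use the strict inequality $\mu(C) > (1-\delta)\mu(B)$ to extract a positive-measure piece $D \subset C$ whose $A^k$-ratio lies below the target, and take $B'' = B' \cup D$. Your treatment of the strictness is in fact slightly cleaner than the paper's, which introduces auxiliary constants $\eps_o, \eps_1$ to quantify the same slack you handle directly via the definition of the infimum.
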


\begin{proof}
First note that if $B_1, B_2 \subset B$ are measurable sets with 
\emph{$\mu$-null} intersection, which both satisfy inequality \eqref{wanted ineq}, then so does the set $B_1 \cup B_2$. \\

Assume that $B' \subset B$ satisfies \eqref{wanted ineq} and 
$\mu(B') < \delta \cdot \mu(B)$. Set $B_o = B \setminus B'$
and note that 
\[
\mu(B_o) \geq (1-\delta) \cdot \mu(B)
\]
and thus
\[
\eps_o = 1 - (1-\delta) \cdot \frac{\mu(B)}{\mu(B_o)} > 0.
\]
If we define
\[
\eps_1 = \eps_o \cdot (1-\delta)^{-1} \cdot \frac{\mu(AB)}{\mu(B)},
\]
then, by Proposition \ref{prop1.2}, applied to the measurable set $B_o$ of
positive $\mu$-measure, there exists a measurable set $B_o' \subset B_o$ of
positive $\mu$-measure, such that 
\begin{eqnarray*}
\Big( \frac{\mu(A^{k}B_o')}{\mu(B_o')} \Big)^{\frac{1}{k}}
&\leq & 
\eps_1 + \frac{\mu(AB_o)}{\mu(B_o)} \\
&\leq &
\eps_1 + (1-\eps_o) \cdot (1-\delta)^{-1} \cdot \frac{\mu(AB_o)}{\mu(B)} \\
&\leq &
\eps_1 + (1-\eps_o) \cdot (1-\delta)^{-1} \cdot \frac{\mu(AB)}{\mu(B)} \\
&= &
(1-\delta)^{-1} \cdot \frac{\mu(AB)}{\mu(B)}.
\end{eqnarray*}
Hence, $B_o' \subset B$ satisfies inequality \eqref{wanted ineq}, and
since $B_o'$ is disjoint to $B'$, we conclude that the set 
\[
B'' = B' \cup B_o' \subset B
\]
also satisfies \eqref{wanted ineq}, which finishes the proof. 
\end{proof}

The proof of Proposition \ref{prop1} is now an almost immediate consequence of the two propositions above. 

\subsubsection{Proof of Proposition \ref{prop1}}
Let $A \subset G$ be a \emph{finite} set and suppose $B \subset X$ is 
a measurable subset of positive $\mu$-measure. Given $0 < \delta < 1$ and a positive integer $k$, we wish to establish the inequality 
\begin{equation}
\label{wish}
c_\delta(A^{k},B)^{\frac{1}{k}} \leq (1-\delta)^{-1} 
\cdot
\frac{\mu(AB)}{\mu(B)},
\end{equation}
where 
\[
c_\delta(A,B) = \inf 
\Big\{ \frac{\mu(AB')}{\mu(B')} \, : \, B' \subset B 
\qand 
\mu(B') \geq \delta \cdot \mu(B) \Big\}.
\]
By Proposition \ref{prop1.2}, we have
\[
c(A^{k},B)^{\frac{1}{k}} 
\leq 
c(A,B) 
<
(1-\delta)^{-1} \cdot \frac{\mu(AB)}{\mu(B)},
\]
and thus there exists a measurable subset $B' \subset B$ of 
positive $\mu$-measure, such that
\begin{equation}
\label{wish2}
\Big( \frac{\mu(A^{k}B')}{\mu(B')} \Big)^{\frac{1}{k}} 
\leq 
(1-\delta)^{-1} \cdot \frac{\mu(AB)}{\mu(B)}.
\end{equation}
If $\mu(B') \geq \delta \cdot \mu(B)$, then
\[
c_\delta(A^{k},B)^{\frac{1}{k}} 
\leq
\Big( \frac{\mu(A^{k}B')}{\mu(B')} \Big)^{\frac{1}{k}} 
\leq 
(1-\delta)^{-1} \cdot \frac{\mu(AB)}{\mu(B)},
\]
which is what we wanted to prove. \\

A potentially problematic case would be when there is \emph{no} measurable
subset $B' \subset B$  at all with the lower bound $\mu(B') \geq \delta \cdot \mu(B)$ and which satisfies \eqref{wish2}. Assume, for the sake of contradiction, that we are in this situation and define
\[
\cE 
= 
\Big\{ 
B' \subset B \, : \, \textrm{$B'$ satisfies \eqref{wish2}},
\Big\}
\]
where we also insist that the subsets $B' \subset B$ are measurable. \\

Since $\cE$ is closed under unions of increasing sequences of sets, by the Principle of Exhaustion (see Lemma 215A in \cite{Fremlin}), there exists a measurable subset $B_\infty$ in $\cE$ such that whenever $B'$ is an element
in $\cE$ with $B_\infty \subset B'$, then $B' \setminus B_\infty$ is a $\mu$-null set. By assumption we have $\mu(B_\infty) < \delta \cdot \mu(B)$, so Proposition \ref{prop1.3} guarantees that we can find a measurable set $B'$ in $\cE$ such that $B_\infty \subset B' \subset B$ and with the property
that $B' \setminus B_\infty$ is \emph{not} a $\mu$-null set. 
However, this contradicts the maximality of $B_\infty$ described above,
so we conclude that $\mu(B_\infty) \geq \delta \cdot \mu(B)$, and 
thus
\[
c_\delta(A^{k},B)^{\frac{1}{k}} 
\leq
\Big( \frac{\mu(A^{k}B_\infty)}{\mu(B_\infty)} \Big)^{\frac{1}{k}} 
\leq 
(1-\delta)^{-1} \cdot \frac{\mu(AB)}{\mu(B)}.
\]
Note that if $A \subset G$ is \emph{any} set and $A'$ is a \emph{finite} subset of $A^{k}$,
then there exists a finite set $A_o \subset A$ such that 
the inclusion $A' \subset A_o^{k}$ holds. Hence,
\[
c_{\delta}(A',B)^{\frac{1}{k}} 
\leq 
c_{\delta}(A_o^{k},B)^{\frac{1}{k}} 
\leq 
(1-\delta)^{-1} \cdot \frac{\mu(A_o B)}{\mu(B)}
\leq 
(1-\delta)^{-1} \cdot \frac{\mu(A B)}{\mu(B)}
\]
for every finite subset $A' \subset A^k$ and thus,
\[
\sup 
\Big\{ 
c_{\delta}(A',B) \, : \, \textrm{$A' \subset A^{k}$ is finite} 
\Big\} 
\leq
(1-\delta)^{-k} \cdot \Big( \frac{\mu(A B)}{\mu(B)} \Big)^{k},
\]
which finishes the proof.

\subsection{Proof of Proposition \ref{prop1.2}}

\begin{lemma}
\label{petridis}
Let $A \subset G$ be a finite set and fix $\eps > 0$. Then, for any set 
$B' \subset B$ such that 
\[
\mu(AB') \leq (1+\eps) \cdot \mu(B') \cdot  c(A,B) 
\]
and for every \emph{finite} set $F \subset G$, we have
\begin{equation}
\label{petridis ineq}
\mu(FAB') \leq \Big( (1+\eps) \cdot \mu(FB') 
+ 
\eps \cdot |F| \cdot \mu(B') \Big) \cdot c(A,B).
\end{equation}
\end{lemma}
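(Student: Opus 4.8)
The plan is to adapt Petridis's inductive argument from \cite{Petridis12} to the measure-theoretic setting, carrying out an induction on the cardinality $|F|$. Throughout, I would write $K = c(A,B)$. The structural feature I intend to exploit is that, since $K$ is an infimum over \emph{all} measurable subsets of $B$, one has the minimality inequality
\[
\mu(AZ) \geq K \cdot \mu(Z)
\]
for every measurable $Z \subset B$ (trivially when $\mu(Z) = 0$, and by definition of the infimum otherwise). The hypothesis on $B'$ is the matching near-equality $\mu(AB') \leq (1+\eps) K \mu(B')$. The base case $F = \emptyset$ (or $|F| = 1$) is immediate, using that the action is measure-preserving, so that $\mu(fS) = \mu(S)$ for every $f \in G$ and every measurable $S$.

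For the inductive step I would pass from $F$ to $F \cup \{s\}$ with $s \notin F$, and decompose
\[
\mu\big((F \cup \{s\})AB'\big) = \mu(FAB') + \mu\big(sAB' \setminus FAB'\big).
\]
The key device is the ``already covered'' set
\[
Z = \big\{ x \in B' \, : \, sAx \subset FAB' \big\} = \bigcap_{a \in A} \big( B' \cap (sa)^{-1} FAB'\big),
\]
which is measurable because $A$ and $F$ are finite and the action is by bimeasurable bijections. By construction $sAZ \subset FAB'$, so the overflow is controlled by $sAB' \setminus FAB' \subset s(AB' \setminus AZ)$; since $s$ is measure-preserving this yields
\[
\mu\big(sAB' \setminus FAB'\big) \leq \mu(AB') - \mu(AZ) \leq (1+\eps)K\mu(B') - K\mu(Z) = K\mu(B' \setminus Z) + \eps K \mu(B'),
\]
where the near-minimality of $B'$ and the minimality inequality applied to $Z \subset B$ are used in the middle step.

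It then remains to fold these estimates together with the inductive hypothesis. The one genuinely combinatorial point --- and the place where commutativity of $G$ is essential --- is the containment
\[
s(B' \setminus Z) \subset sB' \setminus FB'.
\]
Indeed, if $sx = fx''$ with $f \in F$ and $x'' \in B'$, then for every $a \in A$ one has $(sa)x = (fa)x'' \in FAB'$ (using $sa = as$ and $fa = af$), forcing $x \in Z$; contrapositively, $x \in B' \setminus Z$ implies $sx \notin FB'$. Since $s$ is a measure-preserving bijection, this gives $\mu(B' \setminus Z) \leq \mu(sB' \setminus FB')$, hence $\mu(B' \setminus Z) \leq (1+\eps)\mu(sB' \setminus FB')$. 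Combining this with the inductive hypothesis $\mu(FAB') \leq \big((1+\eps)\mu(FB') + \eps |F| \mu(B')\big)K$ and the overflow bound, the term $(1+\eps)\mu(FB') + \mu(B' \setminus Z)$ is absorbed into $(1+\eps)\mu\big((F \cup \{s\})B'\big)$, while the extra $\eps K \mu(B')$ upgrades $\eps|F|$ to $\eps(|F|+1)$, closing the induction.

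I expect the main obstacle to be organizational rather than conceptual: one must choose the definition of $Z$ so that simultaneously the overflow $sAB' \setminus FAB'$ is bounded via the minimality inequality on $Z$, and the translated complement $s(B' \setminus Z)$ lands outside $FB'$. Both inclusions hinge on commutativity, so the argument as written would not transfer verbatim to non-abelian $G$. A secondary point requiring care is the bookkeeping of the $\eps$-slack, which must accumulate at exactly the linear rate $\eps|F|\mu(B')K$ predicted by the statement; tracking this is precisely what dictates using $1+\eps \geq 1$ to pass from $\mu(B'\setminus Z) \leq \mu(sB'\setminus FB')$ to its weighted form.
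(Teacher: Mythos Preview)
Your proof is correct and follows the same overall strategy as the paper: induction on $|F|$, using the minimality inequality $\mu(AZ) \geq c(A,B)\mu(Z)$ on an auxiliary subset $Z \subset B'$, with commutativity of $G$ as the essential structural ingredient.

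The one noteworthy difference is your choice of $Z$. You take the ``already covered'' set $Z = \{x \in B' : sAx \subset FAB'\}$, which is closer to Petridis's original combinatorial formulation. The paper instead uses the smaller and simpler set $Z_0 = B' \cap s^{-1}FB'$, relying on the inclusion $A Z_0 \subset AB' \cap s^{-1}FAB'$ (which is exactly your contrapositive computation, rephrased). With the paper's choice one gets the \emph{exact} inclusion--exclusion identity $\mu(FB') + \mu(B') - \mu(Z_0) = \mu(F'B')$, whereas your larger $Z$ yields only the inequality $\mu(B' \setminus Z) \leq \mu(sB' \setminus FB')$; both are sufficient to close the induction with the same $\eps$-bookkeeping. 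Neither choice has a real advantage here, but it is worth being aware that the paper's $Z_0$ avoids the intersection over $a \in A$ and makes the role of commutativity slightly more transparent (it is used once, in the single inclusion $A(B' \cap s^{-1}FB') \subset AB' \cap s^{-1}FAB'$).
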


\begin{proof}
Note that inequality \eqref{petridis ineq} trivially holds whenever the set $F$ consists of a single point. Our argument now goes as follows. Fix a finite set $F \subset G$ for which \eqref{petridis ineq} holds
and pick $g \in G \setminus F$. We shall prove that \eqref{petridis ineq} then holds for the set $F' = F \cup \{g\}$. \\

Since $G$ is abelian, we have the inclusion
\[
A(B' \cap g^{-1}FB') \subseteq  AB' \cap g^{-1}FAB',
\]
and thus,
\begin{eqnarray*}
F'AB 
&=&
FAB' \cup \Big( gAB' \setminus FAB'\Big) \\
&=&
FAB' \cup g \Big(AB' \setminus \big(AB' \cap g^{-1}FAB' \big) \Big) \\
&\subseteq &
FAB' \cup g \Big(AB' \setminus A(B' \cap g^{-1}FB') \Big).
\end{eqnarray*}
Since $B' \cap g^{-1}FB' \subset B' \subset B$, we have
\[
\mu\big(A(B' \cap g^{-1}FB')\big) \geq \mu\big(B' \cap g^{-1}FB'\big) \cdot c(A,B),
\]
and thus
\begin{eqnarray*}
\mu\big( F'AB' \big) 
&\leq & 
\mu(FAB') + \mu(AB') - \mu(A(B' \cap g^{-1}FB')) \\
&\leq &
\mu(FAB') + \mu(AB') - \mu(B' \cap g^{-1}FB') \cdot c(A,B) \\
&\leq &
\mu(FAB') + \Big( (1+\eps) \cdot \mu(B') - \mu(B' \cap g^{-1}FB')\Big) \cdot c(A,B).
\end{eqnarray*}
Since \eqref{petridis ineq} is assumed to hold for the set $F$, we conclude
that
\[
\mu\big( F'AB' \big) 
\leq
\Big( 
(1+\eps) \cdot \mu(FB') + \eps \cdot |F| \cdot \mu(B') + (1+\eps) \cdot \mu(B') - \mu(B' \cap g^{-1}FB')
\Big) \cdot c(A,B).
\]
Note that
\[
\mu(FB') + \mu(B') - \mu(B' \cap g^{-1}FB') = \mu(F'B)
\]
and thus
\begin{eqnarray*}
\mu\big( F'AB' \big)  
&\leq & 
\Big( 
\mu(F'B') + \eps \cdot |F'| \cdot \mu(B') + \eps \cdot \mu(FB') 
\Big) \cdot c(A,B) \\
&\leq &
\Big( 
(1 + \eps) \cdot \mu(F'B') + \eps \cdot |F'| \cdot \mu(B')  
\Big) \cdot c(A,B),
\end{eqnarray*}
which finishes the proof.
\end{proof}

\begin{remark}
We stress that the inclusion
\[
A(B' \cap g^{-1}FB') \subseteq  AB' \cap g^{-1}FAB',
\]
for all subsets $F, A \subset G$ and $B' \subset B$ is the only instance 
in the proof where we use the assumption that $G$ is an abelian group.
\end{remark}

The following proposition strictly contains Proposition \ref{prop1}.
\begin{proposition}
\label{petridis2}
Let $A \subset G$ be a finite set and fix an integer $k$. Then there exists a 
non-negative constant $D_k$, which only depends on $k$ and $A$, such that whenever $0 < \eps < 1$ and $B' \subset B$ is a measurable set with
\[
\mu(AB') \leq (1+\eps) \cdot \mu(B') \cdot c(A,B),
\]
then
\begin{equation}
\label{petridis induction}
\frac{\mu(A^{k+1}B')}{\mu(B')}
\leq  
(1+\eps)^{k+1} \cdot c(A,B)^{k+1} 
+ 
\eps \cdot D_k \cdot c(A,B)^{k}.
\end{equation}
In particular, 
\[
c(A^{k},B) \leq c(A,B)^{k}
\]
for every integer $k$.
\end{proposition}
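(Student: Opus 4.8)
The plan is to prove \eqref{petridis induction} by induction on $k$ and then extract $c(A^k,B) \leq c(A,B)^k$ as a limiting consequence. The only substantial input I would use is Lemma \ref{petridis}, and the structural fact I would rely on throughout is that $c(A,B) \geq 1$. Indeed, since the $G$-action preserves $\mu$, for any $a \in A$ and any $B' \subseteq B$ of positive measure we have $\mu(AB') \geq \mu(aB') = \mu(B')$, so every ratio $\mu(AB')/\mu(B')$ is at least one and hence so is their infimum. This bound is exactly what will let me absorb lower-order error terms into a single leading power of $c(A,B)$.

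For the base case $k = 0$, inequality \eqref{petridis induction} reads $\mu(AB')/\mu(B') \leq (1+\eps)\,c(A,B) + \eps D_0$, which holds with $D_0 = 0$ directly from the standing hypothesis $\mu(AB') \leq (1+\eps)\,\mu(B')\,c(A,B)$.

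For the inductive step, suppose \eqref{petridis induction} holds at level $k$ with constant $D_k$. I would apply Lemma \ref{petridis} with the finite set $F = A^{k+1}$ to the same $B'$ (which still satisfies the lemma's hypothesis), obtaining
\[
\mu(A^{k+2}B') \leq \Big( (1+\eps)\,\mu(A^{k+1}B') + \eps\,|A^{k+1}|\,\mu(B') \Big) \cdot c(A,B).
\]
Dividing by $\mu(B')$, substituting the inductive bound for $\mu(A^{k+1}B')/\mu(B')$, and expanding, the leading term becomes $(1+\eps)^{k+2}\,c(A,B)^{k+2}$ as required, while the two error terms are $\eps(1+\eps)\,D_k\,c(A,B)^{k+1}$ and $\eps\,|A^{k+1}|\,c(A,B)$. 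This is where the bookkeeping matters: using $0 < \eps < 1$ to replace $(1+\eps)$ by $2$, and using $c(A,B) \geq 1$ to dominate $c(A,B)$ by $c(A,B)^{k+1}$, both error terms are bounded by $\eps\big(2D_k + |A^{k+1}|\big)\,c(A,B)^{k+1}$. Thus \eqref{petridis induction} holds at level $k+1$ with $D_{k+1} = 2D_k + |A^{k+1}|$, a quantity depending only on $k$ and $A$, which closes the induction.

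Finally, to deduce $c(A^k,B) \leq c(A,B)^k$ for $k \geq 1$, I would invoke the definition of the infimum $c(A,B)$: for each $\eps \in (0,1)$ there is a measurable $B' \subseteq B$ of positive measure with $\mu(AB') \leq (1+\eps)\,\mu(B')\,c(A,B)$. Applying \eqref{petridis induction} at level $k-1$ to this $B'$ gives
\[
c(A^k,B) \leq \frac{\mu(A^k B')}{\mu(B')} \leq (1+\eps)^k\,c(A,B)^k + \eps\,D_{k-1}\,c(A,B)^{k-1},
\]
and letting $\eps \to 0$ with $k$, $A$, and $B$ held fixed collapses the right-hand side to $c(A,B)^k$. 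The main obstacle is not conceptual but lies in the error-term bookkeeping of the inductive step: one must verify that the accumulated constant $D_k$ can be chosen independently of both $\eps$ and $B'$, and this is precisely what the two observations $\eps < 1$ and $c(A,B) \geq 1$ secure.
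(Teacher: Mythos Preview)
Your proof is correct and follows essentially the same route as the paper: induction on $k$ via Lemma \ref{petridis} applied with $F$ a power of $A$, absorbing error terms using $\eps<1$ and $c(A,B)\geq 1$, then letting $\eps\to 0$ to extract $c(A^k,B)\leq c(A,B)^k$. The only cosmetic differences are that the paper indexes the inductive step as passing from level $k-1$ to level $k$ (applying the lemma with $F=A^k$) and bounds $|A^k|$ by $|A|^k$, yielding the recursion $D_k = 2D_{k-1} + |A|^k$, whereas you keep $|A^{k+1}|$ and obtain $D_{k+1}=2D_k+|A^{k+1}|$; both are fine.
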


\begin{proof}
Note that inequality \eqref{petridis induction} clearly holds for $k = 0$ and $D_o = 0$. Define $D_{-1} = 0$ and assume that the inequality has been established for all integers up to $k$. We wish to prove that the inequality then also holds for $k+1$. \\

By Lemma \ref{petridis}, applied to the set $F = A^{k}$, we have
\[
\mu(A^{k+1}B') 
\leq 
\Big( 
(1+\eps) \cdot \mu(A^{k}B') + \eps \cdot |A|^{k} \cdot \mu(B')
\Big) 
\cdot 
c(A,B).
\]
By our induction assumption, there exists a non-negative constant 
$D_{k-1}$, which only depends on $k$ and the set $A$, such that
\[
\frac{\mu(A^{k}B')}{\mu(B')}
\leq  
(1+\eps)^{k} \cdot c(A,B)^{k} 
+ 
\eps \cdot D_{k-1} \cdot c(A,B)^{k-1},
\]
and thus, since $c(A,B) \geq 1$, we have
\[
\frac{\mu(A^{k+1}B')}{\mu(B')} 
\leq
(1+\eps)^{k+1} \cdot c(A,B)^{k+1} + 
\eps \cdot (2 \cdot D_{k-1} + |A|^{k}) \cdot c(A,B)^{k},
\]
which establishes inequality \eqref{petridis induction} for $k+1$
with $D_k = 2 \cdot D_{k-1} + |A|^{k}$. The last assertion now 
follows upon letting $\eps$ tend to zero. 
\end{proof}

\section{Proof of Proposition \ref{prop2}}
\label{Section prop2}

Let $G$ be a countable (not necessarily abelian) group and suppose $(X,\mu)$ is an \emph{ergodic} $G$-space. Fix a set $A \subset G$ and a 
Borel measurable set $B \subset X$ with positive $\mu$-measure. 
Given $\delta > 0$, we wish to relate the quantities $c_\delta(A,B)$ and
\[
c'_\delta(A,B) = \sup
\Big\{ 
c_\delta(A',B) \, : \, \textrm{$A' \subset A$ is finite} 
\Big\}.
\]
Clearly we always have $c'_\delta(A,B) \leq c_\delta(A,B)$, but it is not 
immediately clear that equality should hold (not even in the case when $A = G$). \\

Fix an increasing exhausting $(A_k)$ of $A$ by finite sets and choose 
a sequence $(B_k)$ of Borel measurable subsets of $B$ with 
$\mu(B_k) \geq \delta \cdot \mu(B)$ such that
\[
c'_\delta(A,B) = \lim_{k \ra \infty} \frac{\mu(A_k B_k)}{\mu(B_k)}.
\]
The aim of this section is to show that if $A \subset G$ is an ergodic set, 
then 
\[
\lim_{k \ra \infty} \frac{\mu(A_k B_k)}{\mu(B_k)} \geq \frac{1}{\mu(B)},
\]
for every measurable set $B \subset X$ and $0 < \delta < 1$. The following proposition supplies the crucial step in the proof. 

\begin{proposition}
\label{prop3}
Let $(A_k)$ be an increasing sequence of \emph{finite} sets in $G$ with union $A$ 
and let $(B_k)$ be a sequence of measurable subsets of $X$ with a uniform lower
bound on their $\mu$-measures. Then there exist a subsequence $(k_i)$ such that the limit
\[
f(x) = \lim_{m \ra \infty} \frac{1}{m} \sum_{i = 1}^{m} \chi_{B_{k_i}}(x)
\]
exists almost everywhere with respect to $\mu$, and if we define the level sets 
\[
E_t = \Big\{ x \in X \, : \, f(x) \geq t \Big\}
\]
for $t \geq 0$, then
\[
\limsup_{k \ra \infty} \frac{\mu(A_{k} B_{k})}{\mu(B_{k})} 
\geq
\frac{\int_{0}^{1} \mu(AE_t) \, dt}{\int_{0}^{1} \mu(E_t) \, dt}. 
\]
\end{proposition}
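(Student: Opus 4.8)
The plan is to separate the proof into two essentially independent tasks: first, extract a subsequence along which the Ces\`aro averages of the indicators converge pointwise; second, compare the magnification ratio $\mu(A_kB_k)/\mu(B_k)$ with the level-set integrals of the resulting limit function.

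For the first task, I would use that each $\chi_{B_k}$ lies in the unit ball of $L^2(X,\mu)$, which is weakly sequentially compact since $L^2(X,\mu)$ is separable. Passing to a weakly convergent subsequence and then invoking the Banach--Saks theorem, I obtain a subsequence $(k_i)$ whose Ces\`aro averages $f_m=\frac1m\sum_{i=1}^m\chi_{B_{k_i}}$ converge in $L^2$-norm to some $f$. To upgrade this to almost-everywhere convergence of the \emph{full} averaged sequence $(f_m)$, which is what the statement demands, I would run the Banach--Saks construction so that $\|f_m-f\|_2^2=O(1/m)$, deduce $f_{m^2}\to f$ a.e.\ by Borel--Cantelli, and interpolate over the blocks $m^2\le p<(m+1)^2$ using $0\le\chi_{B_{k_i}}\le 1$ to control $|f_p-f_{m^2}|$; alternatively one may invoke Koml\'os's theorem directly. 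After relabeling I assume $(B_k)$ itself is this subsequence. Then $0\le f\le 1$ and $\int_X f\,d\mu=\lim_m\frac1m\sum_{i=1}^m\mu(B_i)\ge\delta\mu(B)>0$, so the denominator $\int_0^1\mu(E_t)\,dt=\int_X f\,d\mu$ is positive.

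For the second task, fix a finite set $F\subseteq A$ and put $g_F(x)=\max_{a\in F}f(a^{-1}x)$. The identity $FE_t=\{g_F\ge t\}$ together with the layer-cake formula gives $\int_0^1\mu(FE_t)\,dt=\int_X g_F\,d\mu$; the same computation with the supremum over all of $A$ gives $\int_0^1\mu(AE_t)\,dt=\int_X g\,d\mu$ for $g=\sup_{a\in A}f(a^{-1}\cdot)$, and $\int g_{A_j}\uparrow\int g$ by monotone convergence as $A_j\uparrow A$. The heart of the argument is the pointwise lower bound
\[
\liminf_{m\to\infty}\frac1m\sum_{i=1}^m\chi_{FB_i}(x)\ \ge\ g_F(x)\qquad\text{for $\mu$-almost every }x,
\]
which I would prove by interchanging the maximum with the average: since $\chi_{FB_i}(x)=\max_{a\in F}\chi_{B_i}(a^{-1}x)$, one has $\frac1m\sum_i\chi_{FB_i}(x)\ge\max_{a\in F}\frac1m\sum_i\chi_{B_i}(a^{-1}x)=\max_{a\in F}f_m(a^{-1}x)$, and the right-hand side converges a.e.\ to $g_F(x)$ because each $a\in F$ preserves $\mu$ and $F$ is finite.

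Integrating this bound via Fatou's lemma yields $\liminf_m\frac1m\sum_{i=1}^m\mu(FB_i)\ge\int g_F$, while bounded convergence gives $\frac1m\sum_{i=1}^m\mu(B_i)\to\int f$. A routine Ces\`aro comparison then converts the estimate $\mu(FB_k)\le(R+\varepsilon)\mu(B_k)$ (valid for all large $k$, where $R=\limsup_k\mu(FB_k)/\mu(B_k)$) into $\int g_F\le R\int f$, that is, $\limsup_k\mu(FB_k)/\mu(B_k)\ge\int g_F/\int f$. Finally, taking $F=A_j$ and using $\mu(A_kB_k)\ge\mu(A_jB_k)$ for $k\ge j$, then letting $j\to\infty$ with $\int g_{A_j}\uparrow\int g$, produces the claimed inequality. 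I expect the main obstacle to be the first task: ensuring that the \emph{full} sequence of Ces\`aro averages, not merely a further subsequence, converges almost everywhere, which is why the blocking/interpolation step (or an appeal to Koml\'os's theorem) is essential; the magnification estimate itself rests only on the elementary max--average interchange together with Fatou's lemma and bounded convergence. Note also that no ergodicity of $(X,\mu)$ is needed for this proposition.
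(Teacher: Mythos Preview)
Your proof is correct and, for the second task, genuinely cleaner than the paper's. The first task---extracting a subsequence along which the Ces\`aro averages converge almost everywhere via weak compactness, a Banach--Saks type selection, Borel--Cantelli along the squares, and interpolation---is essentially identical to what the paper does in its Lemma~\ref{ae convergence}. The divergence is in the second task. The paper works at the level of the \emph{approximate} level sets $E_t^{n,N}$ of the finite averages $f_{n,N}$: it proves an inclusion $A_{k_N}E_t^{n,N}\subset F_t^{n,N}$ (Lemma~\ref{inclusion}), invokes a tailored Ces\`aro lemma (Lemma~\ref{asymptotic}) to pass from the sup of ratios to a ratio of sums, and then uses Egorov's theorem (Lemma~\ref{ae convergence2}) to show $\mu(A_{k_N}E_t^{n,N})\to\mu(A_{k_N}E_t)$ for all but countably many $t$, before finally letting $N\to\infty$. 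You bypass Egorov entirely by collapsing the layer-cake integrals to $\int g_F\,d\mu$ with $g_F=\max_{a\in F}f(a^{-1}\cdot)$ and observing the elementary pointwise bound $\frac1m\sum_i\chi_{FB_i}\ge\max_{a\in F}f_m(a^{-1}\cdot)$, after which Fatou and a one-line Ces\`aro comparison finish the job. Your route is shorter and isolates the role of measure-preservation (needed only to transfer the a.e.\ convergence of $f_m$ to each of the finitely many translates $f_m(a^{-1}\cdot)$); the paper's route, on the other hand, yields the slightly sharper intermediate statement of Lemma~\ref{help prop3} with an explicit $A_{k_N}$ at finite stage, though this refinement is not used elsewhere.
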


\subsection{Proof of Proposition \ref{prop2}}
Let $(X,\mu)$ be an ergodic $G$-space and suppose $A \subset G$
is an ergodic set with respect to $(X,\mu)$. If $B \subset X$ is a measurable subset of positive $\mu$-measure and $\delta > 0$, we 
define
\[
c_{\delta}(A,B) = 
\inf
\Big\{ 
\frac{\mu(AB')}{\mu(B')} 
\, : \, 
B' \subset B
\qand
\mu(B') \geq \delta \cdot \mu(B)
\Big\}.
\]
One readily checks that
\[
c_\delta(A',B) \leq c_\delta(A,B) = \frac{1}{\mu(B)}
\]
for every subset $A' \subset A$, so it suffices to show that whenever
$(A_k)$ is an exhaustion of $A$ by finite sets and $(B_k)$ is a 
sequence of measurable subsets of $B$ with
\[
\mu(B_k) \geq \delta \cdot \mu(B)
\]
for all $k$, then
\[
\limsup_{k \ra \infty} \, \frac{\mu(A_{k} B_{k})}{\mu(B_{k})} 
\geq \frac{1}{\mu(B)}.
\]
By Proposition \ref{prop3}, there exists a subsequence $(k_i)$ such that
the limit
\[
f(x) = \lim_{m \ra \infty} \frac{1}{m} \sum_{i=1}^{m} \chi_{B_{k_i}}(x)
\]
exists almost everywhere with respect to $\mu$ and if we define
\[
E_t = \Big\{ x \in X \, : \, f(x) \geq t \Big\} \subset X
\]
for $t \geq 0$, then
\[
\limsup_{k \ra \infty} \, \frac{\mu(A_k B_k)}{\mu(B_k)}
\geq
\frac{\int_{0}^{1} \mu(AE_t) \, dt}{\int_{0}^{1} \mu(E_t) \, dt}. 
\]
Note that the sets $(E_t)$ are decreasing in $t$, so if we define
\[
r = \sup \big\{ 0 \leq t \leq 1 \, : \, \mu(E_t) > 0 \big\},
\]
then $\mu(E_t) > 0$ for all $0 \leq t < r$, and thus
\[
\mu(A E_t) = 1 \quad \forall \, 0 \leq t < r
\]
and $\mu(AE_t) = 0$ for $t > r$ since $A$ is an ergodic set 
with respect to $(X,\mu)$. Hence,
\[
\frac{\int_{0}^{1} \mu(AE_t) \, dt}{\int_{0}^{1} \mu(E_t) \, dt}
= \frac{1}{\frac{1}{r} \int_{0}^{r} \mu(E_t) \, dt}.
\]
Note that since all the sets $B_k$ are assumed to
be subsets of $B$, and the function $f$ is defined as 
an average of the indicator functions of the sets $(B_k)$,
the set 
\[
E_o = \Big\{ x \in X \, : \, f(x) \geq 0 \Big\} 
 \]
must also be a subset of $B$. Since $E_t \subset E_o$ for 
all $0 \leq t \leq 1$, we have
\[
r \cdot \mu(E_o) \geq \int_{0}^{r} \mu(E_t) \, dt = 
\int_X f(x) \, d\mu(x) 
= 
\lim_{m \ra \infty} \frac{1}{m} \sum_{i=1}^{m} \mu(B_{k_i}) \geq \delta \cdot \mu(B).
\]
and thus
\[
\mu(E_o) \geq \frac{\delta}{r} \cdot \mu(B) \geq \delta \cdot \mu(B),
\] 
since $0 < r \leq 1$. \\

In particular, we have
\[
\frac{\mu(AE_o)}{\mu(E_o)} = \frac{1}{\mu(E_o)} \geq c_\delta(A,B) 
\geq
\frac{1}{\frac{1}{r} \int_{0}^{r} \mu(E_t) \, dt}
\geq
\frac{1}{\mu(E_o)},
\]
by the estimates above, so we can conclude that 
\[
c_\delta(A,B) = \frac{1}{\mu(E_o)} \geq \frac{1}{\mu(B)},
\]
which finishes the proof. 
\subsection{Proof of Proposition \ref{prop3}}

Proposition \ref{prop3} will be an easy consequence of the following 
lemma which will be established in the Subsection \ref{proofLemma}.

\begin{lemma}
\label{help prop3}
Let $(A_k)$ be an increasing sequence of \emph{finite} subsets of $G$ and let
$(B_k)$ be a sequence of measurable subsets of $X$ with a uniform lower
bound on their $\mu$-measures. Then there exists a subsequence $(k_i)$
such that the limit
\[
f(x) = \lim_{m \ra \infty} \, \frac{1}{m} \sum_{i = 1}^{m} \chi_{B_{k_i}}(x) 
\]
exists almost everywhere with respect to $\mu$, and if we define the 
level sets
\[
E_t = \Big\{ x \in X \, : \, f(x) \geq t \Big\}
\]
for all $t \geq 0$, then, for all $\eps > 0$ and for every integer $N_o$, there exists $N \geq N_o$ such that
\[
\sup_{i \geq N} \frac{\mu(A_{k_i} B_{k_i})}{\mu(B_{k_i})} \geq (1 - \eps) \cdot \frac{\int_{0}^{1} \mu(A_{k_N} E_t) \, dt}{\int_{0}^{1} \mu(E_t) \, dt}.
\]
\end{lemma}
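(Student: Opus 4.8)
The plan is to prove the lemma in two stages: first extract a \emph{single} subsequence along which the Ces\`aro averages of the indicators converge pointwise, and then run a purely measure-theoretic comparison between the averages of $\mu(A_{k_i}B_{k_i})$ and the level-set integrals $\int_0^1 \mu(A_{k_N}E_t)\,dt$. For the first stage I would invoke Koml\'os' theorem: since $\sup_k \norm{\chi_{B_k}}_1 \leq 1$, the sequence $(\chi_{B_k})$ is bounded in $L^1(X,\mu)$, so there is a subsequence $(k_i)$ and a function $f$ with $0\leq f\leq 1$ such that
\[
f(x) = \lim_{m\ra\infty}\frac{1}{m}\sum_{i=1}^m \chi_{B_{k_i}}(x)
\]
for $\mu$-almost every $x$. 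This is the subsequence claimed in the statement, and it fixes $f$ and the level sets $E_t=\{f\geq t\}$ once and for all.

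For the measure comparison, fix $N$ and abbreviate $F=A_{k_N}$. The two integrals appearing in the statement are computed by the layer-cake formula: since $0\leq f\leq 1$ one has $\int_0^1 \mu(E_t)\,dt = \int_X f\,d\mu$, and writing $h(x)=\max_{a\in F} f(a^{-1}x)$ one checks that $FE_t = \{h\geq t\}$, whence $\int_0^1 \mu(FE_t)\,dt = \int_X h\,d\mu$ (here $h\leq 1$ as well). Both $h$ and the sets $FE_t$ are measurable because $F$ is finite. I would write $I=\int_X f\,d\mu$ and $J=\int_X h\,d\mu$; the uniform lower bound $\mu(B_k)\geq \delta_0$ guarantees $I\geq\delta_0>0$, so the ratio $J/I$ is well defined.

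The link between $J$ and the magnification ratios comes from a Fatou estimate. Put $g_m=\frac1m\sum_{i=1}^m\chi_{FB_{k_i}}$ and note $\chi_{FB_{k_i}}=\max_{a\in F}\chi_{aB_{k_i}}$. For each fixed $a\in F$ we have $g_m(x)\geq \frac1m\sum_{i=1}^m\chi_{B_{k_i}}(a^{-1}x)\ra f(a^{-1}x)$ almost everywhere, so $\liminf_m g_m\geq h$; Fatou's lemma then yields
\[
\liminf_{m\ra\infty}\frac1m\sum_{i=1}^m\mu(FB_{k_i})\geq \int_X h\,d\mu = J,
\]
while bounded convergence gives $\lim_m\frac1m\sum_{i=1}^m\mu(B_{k_i})=I$. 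To finish, suppose toward a contradiction that $\mu(FB_{k_i})<(1-\eps)\frac{J}{I}\,\mu(B_{k_i})$ for every $i\geq N$. Summing, dividing by $m$, and letting $m\ra\infty$ (the finitely many initial terms wash out) would force $\liminf_m\frac1m\sum_{i=1}^m\mu(FB_{k_i})\leq (1-\eps)J<J$, contradicting the Fatou bound. Hence $\sup_{i\geq N}\frac{\mu(FB_{k_i})}{\mu(B_{k_i})}\geq (1-\eps)\frac{J}{I}$, and since $F=A_{k_N}\subseteq A_{k_i}$ for every $i\geq N$ the left-hand side only grows when $A_{k_N}$ is replaced by $A_{k_i}$, giving exactly the asserted inequality. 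This in fact holds for every $N$, which is stronger than the ``there exists $N\geq N_o$'' in the statement.

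The main obstacle is the very first step. Everything downstream --- the layer-cake identities, the Fatou inequality, and the Ces\`aro comparison --- is routine, but it all rests on having a single subsequence whose averages converge almost everywhere for the full index $m$. Weak-$L^2$ compactness together with the Banach--Saks theorem only produces norm convergence of the averages, from which one can extract almost-everywhere convergence along a sparse sub-sequence of indices $m$, not along all $m$; this is insufficient for the pointwise bound $\liminf_m g_m\geq h$ needed above. Koml\'os' theorem is precisely the tool that removes this difficulty, so the crux of the proof is recognizing that it applies to the uniformly bounded family $(\chi_{B_k})$.
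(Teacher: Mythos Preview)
Your argument is correct and in fact more economical than the paper's. The paper proceeds differently in both stages. For the extraction of the subsequence, instead of citing Koml\'os it proves the needed special case by hand (Lemma~\ref{ae convergence}): pass to a weak $L^2$-limit, extract an almost-orthogonal subsequence, get a.e.\ convergence along the indices $m=N^2$ via Borel--Cantelli, and interpolate. For the inequality, the paper first arranges that $\mu(B_{k_i})$ converges, then uses an elementary comparison lemma (Lemma~\ref{asymptotic}) to replace $\frac{1}{n}\sum \alpha_i/\beta_i$ by $(\sum\alpha_i)/(\sum\beta_i)$ up to a $(1-\eps)$ factor; this is where the existential ``there exists $N\geq N_o$'' enters. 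It then works with the \emph{finite} averages $f_{n,N}$ and their level sets $E_t^{n,N}$, applies an inclusion $A_{k_N}E_t^{n,N}\subset F_t^{n,N}$ (Lemma~\ref{inclusion}), and finally passes to the limit in $n$ via an Egorov argument (Lemma~\ref{ae convergence2}) showing $\mu(A_{k_N}E_t^{n,N})\to\mu(A_{k_N}E_t)$ for all but countably many $t$.

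Your route bypasses Lemmas~\ref{ae convergence2}, \ref{asymptotic} and \ref{inclusion} entirely: the layer-cake identity $\int_0^1\mu(FE_t)\,dt=\int h\,d\mu$ together with the pointwise bound $\liminf_m g_m\geq h$ and Fatou replaces the whole level-set approximation machinery, and the averaging contradiction absorbs the $(1-\eps)$ loss without any preliminary normalisation of $\mu(B_{k_i})$. As you observe, this even yields the conclusion for \emph{every} $N$ rather than just for some $N\geq N_o$. The trade-off is that your first step outsources the a.e.\ convergence to Koml\'os, whereas the paper is self-contained on this point; but since the sequence is uniformly bounded, the special case you need is no harder than Lemma~\ref{ae convergence}. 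One small remark: the strict inequality $(1-\eps)J<J$ in your contradiction step tacitly uses $J>0$; when $J=0$ the asserted bound is trivial, so this is harmless.
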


\subsubsection{Proof of Proposition \ref{prop3}}
Fix an increasing sequence $(A_k)$ of finite sets in $G$ with union $A$
and let $(B_k)$ be a sequence of measurable subsets with a uniform lower
bound on their $\mu$-measures. \\

Fix a decreasing sequence $(\eps_j)$ of positive numbers converging to zero. Lemma \ref{help prop3} guarantees the existence of a subsequence $(k_i)$ such that the limit
\[
f(x) = \lim_{m \ra \infty} \frac{1}{m} \sum_{i=1}^{m} \chi_{B_{k_i}}(x)
\]
exists $\mu$-almost everywhere, and for every $j$, there exists 
$N_j \geq j$ such that
\[
\sup_{i \geq N_j} \frac{\mu(A_{k_i} B_{k_i})}{\mu(B_{k_i})} 
\geq 
(1 - \eps_j) \cdot \frac{\int_{0}^{1} \mu(A_{k_{N_j}} E_t) \, dt}{\int_{0}^{1} \mu(E_t) \, dt}.
\]
Since $(A_k)$ is increasing with union $A$, the $\sigma$-additivity of $\mu$ now implies that
\[
\lim_{j \ra \infty} \mu(A_{k_{N_j}}E_t) = \mu(AE_t)
\]
for all $t \geq 0$, and thus, by dominated convergence, we have
\[
\limsup_{k} \frac{\mu(A_kB_k)}{\mu(B_k)} 
\geq
\lim_{j} \, 
 \sup_{i \geq N_j} \frac{\mu(A_{k_i} B_{k_i})}{\mu(B_{k_i})} 
\geq 
\frac{\int_{0}^{1} \mu(A E_t) \, dt}{\int_{0}^{1} \mu(E_t) \, dt},
\]
which finishes the proof.

\subsection{Proof of Lemma \ref{help prop3}}
\label{proofLemma}

We first show that we can extract a subsequence from
our sequence $(B_k)$ above such that the Cesaro averages of the
corresponding indicator functions converge almost everywhere.
So far, no assumption on the probability space $(X,\mu)$ has been
made. However, in the proof of the following lemma, it will be 
convenient to assume that the associated Hilbert space $L^2(X,\mu)$
is \emph{separable}. For the rest of this subsection, we shall insist on
this assumption. 

\begin{lemma}
\label{ae convergence}
Let $(X,\mu)$ be a probability measure space such that $L^2(X,\mu)$
is a separable Hilbert space and suppose $(\varphi_n)$ is a sequence of uniformly bounded real-valued measurable functions on $X$. Then there exist 
a bounded measurable function $\varphi$, a subsequence 
$(n_k)$ and a conull subset $X' \subset X$ such that
\[
\varphi(x) = \lim_{N \ra \infty} \, \frac{1}{N} \sum_{k=1}^{N} \varphi_{n_k}(x)
\]
for all $x$ in $X'$. 
\end{lemma}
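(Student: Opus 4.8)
The plan is to combine weak sequential compactness in $L^2(X,\mu)$ with a Banach--Saks type extraction, and then to upgrade $L^2$-convergence of the Cesàro averages to almost-everywhere convergence by exploiting the uniform bound on the $\varphi_n$. First I would fix a constant $M$ with $|\varphi_n| \le M$ for all $n$, so that, since $(X,\mu)$ is a probability space, the sequence lies in the closed ball of radius $M$ in $L^2(X,\mu)$. Because $L^2(X,\mu)$ is separable, its closed balls are weakly sequentially compact, so I may pass to a subsequence (still written $\varphi_n$) with $\varphi_n \rightharpoonup \varphi$ weakly in $L^2$ for some $\varphi$. As the set $\{f : |f| \le M\}$ is convex and norm-closed, hence weakly closed, the limit again satisfies $|\varphi| \le M$. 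Replacing $\varphi_n$ by $\psi_n = \varphi_n - \varphi$ reduces everything to proving that, after one further extraction, the Cesàro averages of a weakly null, uniformly bounded (by $2M$) sequence tend to $0$ almost everywhere.

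Second, I would run the standard gliding-hump argument to control the $L^2$ norms of the averages. Inductively, having selected $m_1 < \dots < m_j$, weak nullity gives $\langle \psi_n, \psi_{m_i}\rangle \to 0$ as $n \to \infty$ for each $i \le j$, so I pick $m_{j+1}$ so large that $|\langle \psi_{m_{j+1}}, \psi_{m_i}\rangle| \le 1/j$ for all $i \le j$; then the inner products of any new term with the earlier ones sum to at most $1$ in absolute value. Expanding $\bigl\|\sum_{k=1}^N \psi_{m_k}\bigr\|_2^2 = \sum_{k} \|\psi_{m_k}\|_2^2 + 2\sum_{i<k}\langle \psi_{m_i},\psi_{m_k}\rangle$ and using $\|\psi_{m_k}\|_2^2 \le 4M^2$ yields the bound $\bigl\|\sum_{k=1}^N \psi_{m_k}\bigr\|_2^2 \le (4M^2+2)\,N$, so the averages $S_N = \tfrac1N\sum_{k=1}^N \psi_{m_k}$ obey $\|S_N\|_2^2 = O(1/N)$. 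After relabelling, I assume this holds for the sequence $(\psi_k)$ itself.

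Finally, I would deduce the almost-everywhere statement. Along the sparse sequence $N = j^2$ one has $\sum_j \|S_{j^2}\|_2^2 = \sum_j O(1/j^2) < \infty$, so by monotone convergence $\sum_j (S_{j^2})^2 \in L^1$, whence $(S_{j^2})^2 \to 0$ and therefore $S_{j^2} \to 0$ almost everywhere. To fill the gaps, for $j^2 \le N < (j+1)^2$ I write $S_N = \tfrac{j^2}{N}S_{j^2} + \tfrac1N\sum_{k=j^2+1}^N \psi_k$: here the uniform bound $|\psi_k| \le 2M$ controls the tail deterministically by $\tfrac{2M(N-j^2)}{N} \le 4M/j \to 0$, while $\tfrac{j^2}{N} \to 1$, so $S_N \to 0$ almost everywhere. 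Unwinding $\psi_k = \varphi_{n_k}-\varphi$ then gives the claimed convergence of $\tfrac1N\sum_{k=1}^N \varphi_{n_k}$ to $\varphi$ on a conull set. I expect the main obstacle to be organizing the two successive extractions cleanly and, above all, recognizing that it is the \emph{uniform} bound, rather than mere $L^2$-boundedness, that converts convergence along $N=j^2$ into convergence along all $N$; the weak-compactness and Banach--Saks steps are otherwise routine.
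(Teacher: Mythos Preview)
Your proposal is correct and follows essentially the same route as the paper's proof: extract a weak limit in $L^2$ using separability, apply a Banach--Saks type gliding-hump selection to get $\|S_N\|_2^2 = O(1/N)$, deduce a.e.\ convergence along the squares via summability, and fill the gaps using the uniform pointwise bound. The only differences are cosmetic (the paper phrases the a.e.\ step as ``Borel--Cantelli'' and uses the bound $|\langle\psi_{n_i},\psi_{n_k}\rangle|\le 1/k$ rather than $1/j$, and does not spell out why $|\varphi|\le M$).
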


\begin{proof}
By assumption, 
\[
M = \sup_n \|\varphi_n\|_\infty < \infty,
\]
and thus $(\varphi_n)$ is contained in the centered ball of radius $M$
in $L^2(X,\mu)$. By assumption, $L^2(X,\mu)$ is separable and thus its
unit ball is \emph{sequentially} weakly compact, so we can extract a 
subsequence along which $\varphi_n$ converges to a (bounded) 
measurable function $\varphi$ in the weak topology on $L^2(X,\mu)$. 
If we write $\psi_n = \varphi_n - \varphi$, then $\psi_n$ converges 
weakly to the zero function on $X$, and we wish to show that there 
exists a subsequence $(n_k)$ such that
\[
\lim_{N \ra \infty} \frac{1}{N} \sum_{k=1}^{N} \psi_{n_k}(x) = 0
\]
for $\mu$-almost every $x$ in $X$.  Set $n_1 = 1$, and define 
inductively $n_{k}$, for $k > 1$, by
\[
\int_{X} \psi_{n_i}(x) \, \psi_{n_k}(x) \, d\mu(x) \leq \frac{1}{k}
\]
for all $1 \leq i < k$. One readily checks that
\[
\sum_{N \geq 1} 
\int_X 
\Big| 
\frac{1}{N^{2}} \sum_{k=1}^{N^2} \psi_{n_k}(x) \Big|^2 \, d\mu(x) < \infty,
\]
and thus, by the Borel-Cantelli Lemma, there exists a $\mu$-conull subset
$X' \subset X$, such that
\[
\lim_{N \ra \infty} \frac{1}{N^{2}} \sum_{k=1}^{N^2} \psi_{n_k}(x) = 0, 
\quad
\forall \, x \in X'.
\]
Fix $x \in X'$ and let $(L_j)$ be any increasing sequence. Pick a sequence $(N_j)$ with $N_j^{2} \leq L_j < (N_j+1)^{2}$ for all $j$ and note that 
\[
\Big|
\frac{1}{(N_j+1)^{2}} \sum_{k=1}^{(N_j+1)^2} \psi_{n_k}(x) 
- 
\frac{1}{L_j} \sum_{k=1}^{L_j^2} \psi_{n_k}(x) 
\Big|
\leq
5 \cdot M \cdot \frac{N_j}{(N_j+1)^2},
\]
for all $j$. Since the right hand side and the first term on the left hand side both converge to zero as $j$ tends to infinity we conclude that 
\[
\lim_{j \ra \infty} \frac{1}{L_j} \sum_{k=1}^{L_j} \psi_{n_k}(x) = 0,
\]
for all $x$ in $X'$, which finishes the proof. 
\end{proof}

The second lemma asserts that unions of finite translates of \emph{most} level sets of an almost everywhere convergent sequence of functions behave as one would expect. 

\begin{lemma}
\label{ae convergence2}
Let $(X,\mu)$ be a $G$-space and suppose $(\varphi_n)$ is a sequence of  measurable functions on $X$ which converges to a function $\varphi$ almost everywhere with respect to $\mu$. For every real number $t$
and integer $n$, we define the sets
\[
E_t^{n} 
= 
\Big\{ x \in X \, : \, \varphi_n(x) \geq t \Big\}
\qand
E_t 
= 
\Big\{ x \in X \, : \, \varphi(x) \geq t \Big\}.
\]
Then, for every \emph{finite} set $A \subset G$, we have
\[
\lim_{n \ra \infty} \mu(AE_{t}^{n}) = \mu(AE_{t})
\]
for all but countably many $t$.
\end{lemma}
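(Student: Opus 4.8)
The plan is to collapse the finite union $AE_t^n = \bigcup_{a\in A} aE_t^n$ into a single super-level set of an auxiliary function, and then appeal to the standard fact that super-level set measures converge at every point where the limiting distribution has no atom.

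First I would introduce, for each $y \in X$, the functions
\[
\Psi_n(y) = \max_{a \in A} \varphi_n(a^{-1}y)
\qand
\Psi(y) = \max_{a \in A} \varphi(a^{-1}y).
\]
Since $y \in aE_t^n$ precisely when $\varphi_n(a^{-1}y) \geq t$, taking the union over the finite set $A$ gives the identities
\[
AE_t^n = \big\{ y \in X : \Psi_n(y) \geq t \big\}
\qand
AE_t = \big\{ y \in X : \Psi(y) \geq t \big\},
\]
so that $\mu(AE_t^n)$ and $\mu(AE_t)$ are exactly the super-level set measures of $\Psi_n$ and $\Psi$. I would then note that, because the $G$-action preserves $\mu$ and $A$ is finite, the set $X'' = \bigcap_{a \in A} aX'$ --- where $X'$ is the conull set on which $\varphi_n \to \varphi$ --- is again conull, and that on $X''$ each translated sequence $y \mapsto \varphi_n(a^{-1}y)$ converges to $y \mapsto \varphi(a^{-1}y)$. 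A maximum of finitely many convergent sequences converges to the maximum of the limits, so $\Psi_n \to \Psi$ almost everywhere. This reduces the lemma to the single-function claim that $\mu(\{\Psi_n \geq t\}) \to \mu(\{\Psi \geq t\})$ for all but countably many $t$.

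For the reduced claim I would take $D = \{ t \in \mathbb{R} : \mu(\{\Psi = t\}) > 0 \}$, the set of atoms of the push-forward probability measure $\Psi_*\mu$ on $\mathbb{R}$; this is countable, and it is the only exceptional set needed. Fixing $t \notin D$, at any $x \in X''$ with $\Psi(x) \neq t$ one gets $\chi_{\{\Psi_n \geq t\}}(x) \to \chi_{\{\Psi \geq t\}}(x)$ by splitting into the cases $\Psi(x) > t$ (eventually $\Psi_n(x) > t$, so both indicators equal $1$) and $\Psi(x) < t$ (eventually $\Psi_n(x) < t$, so both equal $0$). Since $t \notin D$, the leftover set $\{\Psi = t\}$ is $\mu$-null, so this convergence holds $\mu$-almost everywhere; dominated convergence (the indicators are bounded by $1$ and $\mu$ is a probability measure) then gives $\mu(\{\Psi_n \geq t\}) \to \mu(\{\Psi \geq t\})$, i.e. $\mu(AE_t^n) \to \mu(AE_t)$.

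The hard part --- really the only delicate point --- is the behaviour on the boundary level set $\{\Psi = t\}$: there $\chi_{\{\Psi_n \geq t\}}$ can oscillate and need not converge, so the argument must discard exactly those $t$ for which $\{\Psi = t\}$ carries positive mass. Recognising that these $t$ are precisely the atoms of $\Psi_*\mu$, and therefore form a countable set, is what produces the ``all but countably many $t$'' conclusion; the measurability of $\Psi$, the conullity of $X''$, and the interchange of limit and integral are all routine.
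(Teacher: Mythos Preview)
Your proof is correct, and it takes a genuinely different route from the paper's. The paper argues via Egorov's theorem: on a set $X_\eps$ of measure at least $1-\eps$ the convergence is uniform, which yields the sandwich $E_{t+\eps}\cap X_\eps \subset E_t^n\cap X_\eps \subset E_{t-\eps}\cap X_\eps$ for large $n$, hence $\mu(AE_{t+\eps}) - \eps|A| \leq \mu(AE_t^n) \leq \mu(AE_{t-\eps}) + \eps|A|$; the conclusion follows since the decreasing function $t\mapsto\mu(AE_t)$ has at most countably many discontinuities. Your approach instead absorbs the finite union into the single function $\Psi=\max_{a\in A}\varphi\circ a^{-1}$, reduces to the case $A=\{e\}$, and argues by pointwise convergence of the indicator functions $\chi_{\{\Psi_n\geq t\}}$ off the level set $\{\Psi=t\}$, followed by dominated convergence. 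The two exceptional sets actually coincide (the discontinuities of $t\mapsto\mu(\{\Psi\geq t\})$ are exactly the atoms of $\Psi_*\mu$), but your reduction via the max is arguably cleaner: it avoids Egorov and the $|A|$-dependent error terms, and makes transparent that the finite set $A$ plays no essential role once the action is measure-preserving.
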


\begin{proof}
By Egorov's Theorem on almost everywhere convergent sequences of 
measurable functions, there exist, for every $\eps > 0$, a measurable 
subset $X_\eps \subset X$ with $\mu(X_{\eps}) \geq 1-\eps$ and 
an integer $n_\eps$ such that 
\[
E_{t+\eps} \cap X_{\eps} \subset E_t^{n} \cap X_\eps \subset E_{t-\eps} \cap X_{\eps}
\]
for all $n \geq n_\eps$ and for every real number $t$. In particular, 
for every finite set $A \subset G$, we have
\[
\mu(AE_{t+\eps}) - \eps \cdot |A| \leq \mu(AE_t^{n}) \leq \mu(AE_{t-\eps}) + \eps \cdot |A|,
\]
for all $n \geq n_\eps$ and for every real number $t$. Hence,
\[
\mu(AE_{t+\eps}) - \eps \cdot |A|
\leq 
\varliminf_{n \ra \infty} \mu(AE_t^{n}) 
\leq
\varlimsup_{n \ra \infty} \mu(AE_t^{n}) 
\leq 
\mu(AE_{t-\eps}) + \eps \cdot |A|,
\]
for all $\eps > 0$. 

Note that the function $t \mapsto \mu(AE_t)$ is decreasing in the 
variable $t$, and hence its set $D$ of discontinuity points is at 
most countable. If $t$ does not belong to $D$, then the left and right
hand side of the inequalities above tend to $\mu(AE_{t})$ as $\eps$
tends to zero, which finishes the proof.
\end{proof}

Recall the setup from the beginning of the subsection. We have fixed an increasing sequence $(A_k)$ of finite subsets of $A \subset G$ and a sequence $(B_k)$ of Borel subsets of $B \subset X$ with $\mu(B_k) \geq \delta \cdot \mu(B)$ such that
\[
c'_\delta(A,B) = \lim_{k \ra \infty} \frac{\mu(A_kB_k)}{\mu(B_k)}.
\]
Our third lemma shows that for large enough $k$, these ratios can 
be approximated from below by ratios of Cesaro averages of the terms which occur in the nominator and denominator. 

\begin{lemma}
\label{asymptotic}
Let $(\beta_k)$ be a sequence of positive real numbers which converges to a
positive real number. Then, for any $\eps > 0$ and for every integer $N_o$, there exists $N \geq N_o$ such that 
for all $n$ and every positive sequence $(\alpha_k)$, the 
inequality 
\[
\frac{1}{n} \sum_{k=N}^{N+n} \frac{\alpha_{k}}{\beta_k} 
\geq
(1-\eps) \cdot 
\Big(
\frac{\alpha_{N} + \ldots + \alpha_{N+n}}{\beta_{N} + \ldots + \beta_{N+n}}
\Big) 
\]
holds. 
\end{lemma}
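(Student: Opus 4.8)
The plan is to exploit the fact that the only source of discrepancy between the average of ratios on the left and the ratio of sums on the right is the variation of the denominators $\beta_k$. If the $\beta_k$ were all equal to a common value, the two quantities would coincide exactly. Since $(\beta_k)$ converges to a positive limit, for large indices the $\beta_k$ are trapped in an arbitrarily narrow band around that limit, and the width of this band is precisely what governs the loss measured by the factor $1 - \eps$.

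Concretely, I would first set $\beta_\infty = \lim_k \beta_k$, which is positive by hypothesis. Given $\eps > 0$, I would choose $\eta > 0$ small enough that $\frac{1-\eta}{1+\eta} \geq 1 - \eps$, which is possible since the left-hand side tends to $1$ as $\eta \to 0$. Convergence of $(\beta_k)$ then yields an index $N \geq N_o$ such that $(1-\eta)\beta_\infty \leq \beta_k \leq (1+\eta)\beta_\infty$ for every $k \geq N$. The essential feature is that this single $N$ depends only on $\eps$ and on the sequence $(\beta_k)$, and not at all on $n$ or on $(\alpha_k)$; this is what secures the required uniformity over $n$ and over the positive sequence $(\alpha_k)$.

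With $N$ fixed, I would write $S_\alpha = \sum_{k=N}^{N+n}\alpha_k$ and $S_\beta = \sum_{k=N}^{N+n}\beta_k$ and perform two termwise estimates, both using the positivity of the $\alpha_k$. Bounding each $\beta_k$ above by $(1+\eta)\beta_\infty$ gives $\frac{1}{n}\sum_{k=N}^{N+n}\frac{\alpha_k}{\beta_k} \geq \frac{S_\alpha}{n(1+\eta)\beta_\infty}$, while bounding each $\beta_k$ below by $(1-\eta)\beta_\infty$ gives $S_\beta \geq (n+1)(1-\eta)\beta_\infty$ and hence $\frac{S_\alpha}{S_\beta} \leq \frac{S_\alpha}{(n+1)(1-\eta)\beta_\infty}$. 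Combining the two estimates reduces the desired inequality to the numerical statement $\frac{(n+1)(1-\eta)}{n(1+\eta)} \geq 1 - \eps$, which follows from $\frac{1-\eta}{1+\eta} \geq 1 - \eps$ together with $n+1 \geq n$.

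I expect no serious obstacle here, as this is an elementary real-variable estimate. The only point demanding genuine care is the order of the quantifiers: $N$ must be committed to before $n$ and $(\alpha_k)$ are seen, so the entire argument must rest solely on pinching the $\beta_k$ near $\beta_\infty$, with the $\alpha_k$ entering only through the factor $S_\alpha$ that cancels in the final comparison. The mild mismatch between the $n+1$ summands and the normalising factor $n$ is harmless, since it only strengthens the left-hand side.
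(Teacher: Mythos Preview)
Your proposal is correct and follows essentially the same approach as the paper: both choose a small parameter $\eta$ (the paper calls it $\delta$) with $\frac{1-\eta}{1+\eta}\geq 1-\eps$, pick $N$ large enough that all $\beta_k$ with $k\geq N$ lie in $[(1-\eta)\beta_\infty,(1+\eta)\beta_\infty]$, and then compare the two sides via these bounds. Your presentation is slightly tidier in handling the $n+1$ versus $n$ count and in noting explicitly that $N\geq N_o$, but the argument is the same.
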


\begin{proof}
Let $\beta = \lim_k \beta_k > 0$ and fix $\eps > 0$. Choose 
$\delta > 0$ such that 
\[
\frac{1-\delta}{1+\delta} \geq 1 - \eps.
\]
There exists an integer $N$ with the property that 
\[
(1+\delta) \cdot \beta \geq \beta_k \geq (1 - \delta) \cdot \beta 
\]
for all $k \geq N$, and thus
\[
\beta_{N} + \ldots + \beta_{N+n}
\geq
(1-\delta) \cdot \beta \cdot n
\]
for all $n$. In particular, 
\[
\frac{\beta_{N} + \ldots + \beta_{N+n}}{n \cdot \beta_k}
\geq 
\frac{1-\delta}{1+\delta} \geq 1-\eps,
\]
for all $k \geq N$ and for all $n$. \\

Fix a positive sequence $(\alpha_k)$ and note that
\[
\frac{1}{n} \cdot \frac{\alpha_k}{\beta_k} 
\geq 
(1-\eps) \cdot 
\frac{\alpha_k}{\beta_{N} + \ldots + \beta_{N+n}}  
\]
for all $k \geq N$ and for all $n$. Hence,
\[
\frac{1}{n} \sum_{k=N}^{N+n} \frac{\alpha_k}{\beta_k}
\geq
(1-\eps) \cdot 
\frac{\alpha_{N} + \ldots + \alpha_{N+n}}{\beta_{N} + \ldots + \beta_{N+n}},
\]
which finishes the proof.
\end{proof}
The fourth and final lemma before we embark on the proof of Lemma
\ref{help prop3} is a simple inclusion of level sets.

\begin{lemma}
\label{inclusion}
Let $A \subset G$ and suppose $(B_k)$ is a sequence of subsets of $X$. Let 
$(p_k)$ be a summable sequence of positive real numbers and define the 
sets 
\[
E_t = \Big\{x \in X \, : \, \sum_{k} p_k \cdot \chi_{B_k}(x) \geq t \Big\}
\qand 
F_t = \Big\{x \in X \, : \, \sum_{k} p_k \cdot \chi_{AB_k}(x) \geq t \Big\}
\]
for non-negative $t$. Then $AE_t \subset F_t$.
\end{lemma}

\begin{proof}
Suppose $x \in AE_t$, so that $a^{-1}x \in E_t$ for some $a \in A$. 
Then
\[
\sum_{k} p_k \cdot \chi_{AB_k}(x) 
\geq
\sum_{k} p_k \cdot \chi_{B_k}(a^{-1} x) \geq t,
\]
which shows that $x \in F_t$.
\end{proof}

\subsection{Proof of Lemma \ref{help prop3}}
By Lemma \ref{ae convergence}, there exists a subsequence $(k_i)$ such
that the limit 
\[
f(x) = \lim_{m \ra \infty} \frac{1}{m} \sum_{i=1}^{m} \chi_{B_{k_i}}(x)
\]
exists $\mu$-almost everywhere. Clearly, we can also arrange so that the 
sequence $\mu(B_{k_i})$ converges. \\

Fix $\eps > 0$ and an integer $N_o$. Since the $\mu$-measures of the sets $B_k$ are assumed to have a lower bound, Lemma \ref{asymptotic}, applied to the sequences
\[
\alpha_i = \mu(A_{k_i} B_{k_i}) \qand \beta_i = \mu(B_{k_i}),
\] 
guarantees that there exists $N \geq N_o$, such that for all $n$, we 
have
\begin{eqnarray*}
\sup_{i \geq N} \frac{\mu(A_{k_i}B_{k_i})}{\mu(B_{k_i})}
&\geq &
\frac{1}{n} \sum_{i=N}^{N+n} \frac{\mu(A_{k_i}B_{k_i})}{\mu(B_{k_i})} \\
&\geq &
(1-\eps) \cdot 
\frac{\sum_{i=N}^{N+n} \mu(A_{k_i}B_{k_i})}{\sum_{i=N}^{N+n}\mu(B_{k_i})} \\
&\geq &
(1-\eps) \cdot 
\frac{\sum_{i=N}^{N+n} \mu(A_{k_N} B_{k_i})}{\sum_{i=N}^{N+n}\mu(B_{k_i})},
\end{eqnarray*}
where in the last inequality w used the inclusion
$A_{k_N} \subset A_{k_{i}}$ for all $i \geq N$. \\

Define the functions 
\[
f_{n,N}(x) = \frac{1}{n} \sum_{i=N}^{N+n} \chi_{B_{k_i}}
\qand 
h_{n,N}(x) = \frac{1}{n} \sum_{i=N}^{N+n} \chi_{A_{k_N} B_{k_i}}
\]
and their associated level sets
\[
E_t^{n,N} = \Big\{ x \in X \, : \, f_{n,N}(x) \geq t \Big\}
\qand
F_t^{n,N} = \Big\{ x \in X \, : \, h_{n,N}(x) \geq t \Big\}
\]
for $t \geq 0$. By Lemma \ref{inclusion}, we have
\[
A_{k_N} E_{t}^{n,N} \subset F_t^{n,N}
\]
for all $t \geq 0$, and thus
\[
\frac{1}{n} \sum_{i=N}^{N+n} \mu(A_{k_N}B_{k_i}) 
= 
\int_{X} h_{n,N} \, d\mu 
= 
\int_{0}^{1} \mu(F_t^{n,N}) \, dt \geq \int_{0}^{1} \mu(A_{k_N}E_t^{n,N}) \, dt 
\]
and
\[
\frac{1}{n} \sum_{i=N}^{N+n} \mu(B_{k_i}) 
= 
\int_{X} f_{n,N} \, d\mu 
= 
\int_{0}^{1} \mu(E_t^{n,N}) \, dt, 
\]
for all $n$ and $N$. We conclude that
\begin{equation}
\label{lowbound}
\sup_{i \geq N} \frac{\mu(A_{k_i}B_{k_i})}{\mu(B_{k_i})}
\geq 
(1-\eps) \cdot \frac{\int_{0}^{1} \mu(A_{k_N}E_t^{n,N}) \, dt }{\int_{0}^{1} \mu(E_t^{n,N}) \, dt},
\end{equation}
for all $n \geq N$. \\

Note that
\[
\Big|
\frac{1}{N+n} \sum_{i=1}^{N+n} \chi_{B_{k_i}}(x) 
-
\frac{1}{n} \sum_{i=N}^{N+n} \chi_{B_{k_i}}(x) 
\Big|
\leq 
2 \cdot \frac{N}{N+n},
\]
for all $n$ and $N$, and since the first term converges $\mu$-almost
everywhere to $f(x)$ as $n$ tends to infinity, so does the second term,
and hence
\[
f(x) = \lim_{n \ra \infty} f_{n,N}(x)
\]
for all $N$, whenever $f(x)$ exists. Hence, by Lemma \ref{ae convergence2}, 
we have
\[
\lim_{n \ra \infty} \mu(A_{k_N}E_t^{n,N}) = \mu(A_{k_N}E_t)
\]
and
\[
\lim_{n \ra \infty} \mu(E_t^{n,N}) = \mu(E_t)
\]
for all but countably many $t \geq 0$. By dominated convergence, 
we conclude that
\[
\sup_{i \geq N} \frac{\mu(A_{k_i}B_{k_i})}{\mu(B_{k_i})}
\geq 
(1-\eps) \cdot \frac{\int_{0}^{1} \mu(A_{k_N}E_t) \, dt }{\int_{0}^{1} \mu(E_t) \, dt},
\]
upon letting $n$ tend to infinity on the right hand side in \eqref{lowbound}.

\section{Proof of Proposition \ref{prop2.1}}
\label{proofProp2.1}

Proposition \ref{prop2.1} will be an immediate consequence of the 
following proposition.

\begin{proposition}
\label{helpProp2.1}
For every set $A \subset G$ and measurable set $B \subset X$ of 
positive $\mu$-measure, we have
\[
c(A,B) 
\leq 
\sup
\Big\{ 
c_\delta(A',B) \, : \, \textrm{$A' \subset A$ is finite}
\Big\},
\]
for all $\delta > 0$.
\end{proposition}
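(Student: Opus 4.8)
The plan is to produce a single subset $B' \subset B$ of positive $\mu$-measure with $\mu(AB')/\mu(B') \leq S$, where
\[
S := \sup\big\{ c_\delta(A',B) \, : \, \text{$A' \subset A$ is finite} \big\};
\]
since any such $B'$ is admissible in the infimum defining $c(A,B)$, this at once yields $c(A,B) \leq S$. The set $B'$ will be extracted as a level set of the almost-sure limit furnished by Proposition \ref{prop3}.

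First I would build an optimizing sequence. Fix an increasing exhaustion $(A_k)$ of $A$ by finite sets. Because $A' \mapsto c_\delta(A',B)$ is monotone under inclusion and every finite subset of $A$ sits inside some $A_k$, the numbers $c_\delta(A_k,B)$ increase to $S$. For each $k$ pick a measurable $B_k \subset B$ with $\mu(B_k) \geq \delta \cdot \mu(B)$ and $\mu(A_k B_k)/\mu(B_k) \leq c_\delta(A_k,B) + 1/k$; sandwiching gives $\lim_k \mu(A_k B_k)/\mu(B_k) = S$. The sequence $(B_k)$ carries the uniform lower bound $\mu(B_k) \geq \delta \cdot \mu(B)$ demanded by Proposition \ref{prop3}.

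Next I would invoke Proposition \ref{prop3} on $(A_k)$ and $(B_k)$. It supplies a subsequence along which the Cesaro averages of $\chi_{B_{k_i}}$ converge $\mu$-a.e.\ to a function $f$ with $0 \leq f \leq 1$, whose level sets $E_t = \{x : f(x) \geq t\}$ satisfy
\[
S = \lim_k \frac{\mu(A_k B_k)}{\mu(B_k)} \geq \frac{\int_0^1 \mu(AE_t) \, dt}{\int_0^1 \mu(E_t) \, dt} =: R.
\]
Since every $B_k \subset B$, one has $f = 0$ off $B$, so $E_t \subset \{f > 0\} \subset B$ for each $t > 0$. The denominator is positive, since $\int_0^1 \mu(E_t)\,dt = \int_X f \, d\mu = \lim_m \frac1m \sum_i \mu(B_{k_i}) \geq \delta \cdot \mu(B) > 0$.

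The only genuinely non-formal step, and the one I expect to require care, is the passage from the ratio $R$ of integrals to a single level set of positive measure lying inside $B$. Here I would argue by contradiction: if $\mu(AE_t) > R \cdot \mu(E_t)$ held for every $t \in (0,1]$ with $\mu(E_t) > 0$, then, using $\mu(AE_t) = 0$ whenever $\mu(E_t) = 0$ (a countable union of $\mu$-null translates is $\mu$-null), the inequality $\mu(AE_t) \geq R \cdot \mu(E_t)$ would hold throughout $(0,1]$ and strictly on the positive-Lebesgue-measure set $\{t : \mu(E_t) > 0\}$; integrating would give $\int_0^1 \mu(AE_t)\,dt > R \int_0^1 \mu(E_t)\,dt$, contradicting the definition of $R$. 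Hence there is a $t_0 \in (0,1]$ with $\mu(E_{t_0}) > 0$ and $\mu(AE_{t_0})/\mu(E_{t_0}) \leq R$. Taking $B' = E_{t_0} \subset B$ then gives $c(A,B) \leq \mu(AB')/\mu(B') \leq R \leq S$, and since $\delta > 0$ was arbitrary the proposition follows.
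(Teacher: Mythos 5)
Your proposal is correct and follows essentially the same route as the paper's: build an optimizing sequence $(A_k,B_k)$, feed it into Proposition \ref{prop3}, and then extract from the ratio of integrals a single positive-measure level set $E_{t_0} \subset B$ witnessing $c(A,B) \leq \mu(AE_{t_0})/\mu(E_{t_0})$. The only cosmetic difference is in that last extraction step: the paper reweights $[0,1]$ by the density $\mu(E_t)/\int_0^1 \mu(E_s)\,ds$ and invokes a Chebyshev-type lemma (Lemma \ref{cheb}), whereas you make the same averaging point directly by contradiction, avoiding the auxiliary measure $\eta$ and the $\eps$.
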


\begin{proof}[Proof of Proposition \ref{prop2.1}]
Fix $\delta > 0$ and an integer $k$. For every set $A \subset G$ 
and measurable set $B \subset X$ with positive $\mu$-measure,
Proposition \ref{prop1} asserts that 
\[
\sup
\Big\{ 
c_\delta(A',B) \, : \, \textrm{$A' \subset A^k$ is finite}
\Big\}
\leq
(1-\delta)^{-k} \cdot \Big( \frac{\mu(AB)}{\mu(B)} \Big)^k.
\]
By Proposition \ref{helpProp2.1}, we have
\[
c(A^k,B)
\leq 
\sup
\Big\{ 
c_\delta(A',B) \, : \, \textrm{$A' \subset A^k$ is finite}
\Big\},
\]
and thus
\[
c(A^k,B) \leq (1-\delta)^{-k} \cdot \Big( \frac{\mu(AB)}{\mu(B)} \Big)^k.
\]
Since $\delta > 0$ is arbitrary, we are done.
\end{proof}

\subsection{Proof of Proposition \ref{helpProp2.1}}

It will be useful to reformulate Proposition \ref{prop3} as follows. We
shall show how to establish this version in the next subsection. 

\begin{proposition}
\label{helpProp2.1help}
For every set $A \subset G$, measurable set $B \subset X$ with
positive $\mu$-measure and $0 < \delta < 1$, there 
exists a measurable function $f : B \ra [0,1]$ and a probability 
measure $\eta$ on $[0,1]$ such that, if we define 
\[
E_t = \Big\{ x \in B \, : \, f(x) \geq t \Big\}, \quad t \in [0,1],
\]
then
\[
\eta\left(\big\{ t \in [0,1] \, : \, \mu(E_t) > 0 \big\}\right) = 1
\]
and 
\[
\sup\Big\{ c_\delta(A',B) \, : \, \textrm{$A' \subset A$ is finite} \Big\}
\geq \int_0^1 \frac{\mu(AE_t)}{\mu(E_t)} \, d\eta(t).
\]
\end{proposition}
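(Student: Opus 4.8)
The plan is to reformulate the conclusion of Proposition \ref{prop3} in a way that replaces the two separate integrals $\int_0^1 \mu(AE_t)\,dt$ and $\int_0^1 \mu(E_t)\,dt$ with a single integral against a probability measure $\eta$ on $[0,1]$. First I would apply Proposition \ref{prop3} directly: given $A \subset G$, the measurable set $B$ and $0 < \delta < 1$, fix an exhaustion $(A_k)$ of $A$ by finite sets and a sequence $(B_k)$ of measurable subsets of $B$ with $\mu(B_k) \geq \delta\cdot\mu(B)$ that realizes the supremum $\sup\{c_\delta(A',B) : A' \subset A \text{ finite}\}$ in the limit. Proposition \ref{prop3} then produces the subsequence $(k_i)$, the limit function $f$, the level sets $E_t = \{x : f(x) \geq t\}$, and the inequality
\[
\sup\Big\{ c_\delta(A',B) : \textrm{$A' \subset A$ is finite} \Big\}
\geq
\frac{\int_0^1 \mu(AE_t)\,dt}{\int_0^1 \mu(E_t)\,dt}.
\]
Since all the $B_k$ lie in $B$, the function $f$ is supported in $B$ and takes values in $[0,1]$, so $E_t \subset B$ for all $t$ as required.

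Next I would convert the ratio of integrals into an integral against a probability measure. The natural choice is to weight $dt$ by the denominator: set
\[
\eta(S) = \frac{\int_S \mu(E_t)\,dt}{\int_0^1 \mu(E_t)\,dt}, \quad S \subset [0,1] \textrm{ Borel},
\]
which is a genuine probability measure on $[0,1]$ provided the normalizing constant $\int_0^1 \mu(E_t)\,dt$ is strictly positive; this positivity follows because $\int_0^1 \mu(E_t)\,dt = \int_X f\,d\mu = \lim_m \frac1m \sum_i \mu(B_{k_i}) \geq \delta\cdot\mu(B) > 0$. With this choice,
\[
\int_0^1 \frac{\mu(AE_t)}{\mu(E_t)}\,d\eta(t)
=
\frac{\int_0^1 \frac{\mu(AE_t)}{\mu(E_t)} \cdot \mu(E_t)\,dt}{\int_0^1 \mu(E_t)\,dt}
=
\frac{\int_0^1 \mu(AE_t)\,dt}{\int_0^1 \mu(E_t)\,dt},
\]
which matches the right-hand side of the inequality above and hence gives the desired bound on the supremum.

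The one point requiring care is the condition $\eta(\{t : \mu(E_t) > 0\}) = 1$ and the fact that the integrand $\mu(AE_t)/\mu(E_t)$ is well-defined $\eta$-almost everywhere. This is where the weighting by $\mu(E_t)$ pays off: the measure $\eta$ assigns zero mass to any set on which $\mu(E_t) = 0$, since $\eta(\{t : \mu(E_t) = 0\}) = \frac{1}{\int_0^1 \mu(E_t)\,dt}\int_{\{\mu(E_t)=0\}} \mu(E_t)\,dt = 0$. Thus the level sets appearing in the integrand have positive measure for $\eta$-almost every $t$, the quotient $\mu(AE_t)/\mu(E_t)$ is unambiguous, and the full-measure condition holds automatically. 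The main obstacle, such as it is, lies in carefully checking that $f$ maps into $[0,1]$ and is supported in $B$ so that $E_t$ is a legitimate subset of $B$ for every $t \in [0,1]$; this is immediate from the construction of $f$ as an almost-everywhere limit of Cesàro averages of indicator functions of subsets of $B$, whose values lie in $[0,1]$. Everything else is a routine normalization argument.
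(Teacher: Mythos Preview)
Your proposal is correct and follows essentially the same approach as the paper: apply Proposition \ref{prop3} to a sequence $(A_k,B_k)$ realizing the supremum, then define $\eta$ as Lebesgue measure on $[0,1]$ weighted by the density $\mu(E_t)\big/\int_0^1 \mu(E_s)\,ds$, which simultaneously converts the ratio of integrals into a single $\eta$-integral and forces $\eta$ to live on $\{t:\mu(E_t)>0\}$. The only cosmetic point is that the $E_t$ coming from Proposition \ref{prop3} are defined as subsets of $X$, so at $t=0$ one has $E_0=X$ rather than $E_0\subset B$; the paper handles this by explicitly restricting $f$ to $B$, and you should do the same (it changes nothing in the integrals since $\{0\}$ is Lebesgue-null).
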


We omit the proof of the following version of Chebyshev's inequality. 

\begin{lemma}
\label{cheb}
Let $(Z,\eta)$ be a probability measure space and suppose $\varphi$
is a non-negative measurable function on $Z$ which does not vanish almost everywhere with respect $\eta$. Then, for every $\eps > 0$, the 
set
\[
Y_\eps 
= 
\Big\{ 
z \in Z \, : \, \varphi(z) < \int_Z \varphi \, d \eta + \eps
\Big\} \subset Z
\]
has positive $\eta$-measure.
\end{lemma}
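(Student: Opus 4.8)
The plan is a direct contradiction argument --- the standard Markov/Chebyshev mechanism. Set $c = \int_Z \varphi \, d\eta \in [0,\infty]$; since $\varphi$ is non-negative and measurable this mean is well-defined. I would first dispose of the trivial case $c = \infty$: as $\varphi$ is a finite-valued measurable function, the defining inequality $\varphi(z) < c + \eps$ is then satisfied at every point of $Z$, so $Y_\eps = Z$ and there is nothing to prove. Hence from now on assume $c < \infty$.

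Suppose, for contradiction, that $\eta(Y_\eps) = 0$. Then outside a $\eta$-null set every point satisfies $\varphi(z) \geq c + \eps$. Integrating this pointwise bound against the probability measure $\eta$ gives
\[
c = \int_Z \varphi \, d\eta \geq \int_Z (c+\eps)\, d\eta = c + \eps,
\]
where the final equality uses $\eta(Z) = 1$. Since $c$ is finite it may be cancelled from both sides, leaving $0 \geq \eps$, which contradicts $\eps > 0$. Therefore $\eta(Y_\eps) > 0$, as claimed.

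A remark on the hypotheses: the assumption that $\varphi$ does not vanish $\eta$-almost everywhere is not actually needed for the argument above --- it merely guarantees $c > 0$, which is the form in which the lemma is applied to the function $t \mapsto \mu(AE_t)/\mu(E_t)$ in the proof of Proposition~\ref{helpProp2.1}. The only point requiring any care is the finiteness of $c$, so that the cancellation in the displayed inequality is legitimate, and this is precisely what the opening case split handles. I therefore do not expect any genuine obstacle here; the entire content of the lemma is the one-line averaging estimate above.
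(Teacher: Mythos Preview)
Your proof is correct; the paper itself omits the proof entirely, simply labeling the lemma ``a version of Chebyshev's inequality'' and moving on, so there is nothing to compare against. The contradiction argument you give is exactly the standard one-line averaging estimate the authors had in mind. Your observation that the non-vanishing hypothesis is superfluous for the lemma itself (and serves only to ensure $c>0$ in the application to $\varphi(t)=\mu(AE_t)/\mu(E_t)$ in the proof of Proposition~\ref{helpProp2.1}) is also accurate. The only implicit assumption worth flagging is that $\varphi$ is real-valued rather than $[0,\infty]$-valued, so that $c=\infty$ forces $Y_\eps=Z$; this is the natural reading of ``measurable function'' here and is certainly satisfied in the intended application.
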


\begin{proof}[Proof of Proposition \ref{helpProp2.1}]
Fix a set $A \subset G$, a measurable set $B \subset X$ of positive 
$\mu$-measure and a real number $0 < \delta < 1$. By Proposition
\ref{helpProp2.1help}, there exists a non-negative function $f$ on 
$B$ and a probability measure $\eta$ on $[0,1]$, such that if 
we define
\[
E_t = \Big\{ x \in B \, : \, f(x) \geq t \Big\}, \quad t \in [0,1],
\] 
then
\[
\eta\left(\big\{ t \in [0,1] \, : \, \mu(E_t) > 0 \big\}\right) = 1
\]
and 
\[
\sup\Big\{ c_\delta(A',B) \, : \, \textrm{$A' \subset A$ is finite} \Big\}
\geq \int_0^1 \frac{\mu(AE_t)}{\mu(E_t)} \, d\eta(t).
\]
In particular, the function
\[
\varphi(t) = \frac{\mu(AE_t)}{\mu(E_t)}
\]
is strictly positive almost everywhere with respect to $\eta$. Hence, 
by Lemma \ref{cheb} applied to $Z = [0,1]$ and the probability measure
$\eta$ on $[0,1]$ above, we conclude that for every $\eps > 0$, there
exists $t_o$ in $[0,1]$ such that $E_{t_o}$ is not $\mu$-null and 
\[
\int_0^1 \frac{\mu(AE_t)}{\mu(E_t)} \, d\eta(t) 
\geq 
\frac{\mu(AE_{t_o})}{\mu(E_{t_o})} - \eps.
\]
Since $E_{t_o} \subset B$ has positive $\mu$-measure, we have 
\[
\int_0^1 \frac{\mu(AE_t)}{\mu(E_t)} \, d\eta(t) 
\geq 
c(A,B) - \eps,
\]
for all $\eps > 0$, which finishes the proof. 
\end{proof}

\subsection{Proof of Proposition \ref{helpProp2.1help}}
Fix $A \subset G$, a measurable set $B \subset X$ with 
positive $\mu$-measure and a real number $0 < \delta < 1$. Choose an 
increasing sequence $(A_k)$ of finite sets in $G$ and a 
sequence $(B_k)$ of measurable subsets of $B$ with 
$\mu(B_k) \geq \delta \cdot \mu(B)$ for all $k$, such 
that
\[
\sup\Big\{ c_\delta(A',B) \, : \, \textrm{$A' \subset A$ is finite} \Big\}
= 
\lim_{k \ra \infty} \frac{\mu(A_kB_k)}{\mu(B_k)}.
\]
By Proposition \ref{prop3}, there exist a subsequence $(k_i)$ and a
$\mu$-conull set $X' \subset X$ such that the limit
\[
f(x) = \lim_{i \ra \infty} \frac{1}{m} \sum_{i=1}^m \chi_{B_{k_i}}(x)
\]
exists for all $x$ in $X'$, and if we define
\[
E_t = \Big\{ x \in X \, : \, f(x) \geq t \Big\},
\]
then
\[
\lim_{k \ra \infty} \frac{\mu(A_kB_k)}{\mu(B_k)}
\geq 
\frac{\int_0^1 \mu(AE_t) \, dt}{\int_0^1 \mu(E_t) \, dt}.
\]
Since $B_k \subset B$ for all $k$, we see that $f(x) > 0$ only if $x$
belongs to $B$. We shall abuse notation and denote the restriction 
of $f$ to $B$ by $f$ as well. Define
\[
\rho(t) = \frac{\mu(E_t)}{\int_{0}^1 \mu(E_t) \, dt}, 
\quad 
\textrm{for $0 \leq t \leq 1$}
\]
and denote by $\eta$ the probability measure on $[0,1]$ with density $\rho$ with respect to the Lebesgue measure. Since
\[
\frac{\int_0^1 \mu(AE_t) \, dt}{\int_0^1 \mu(E_t) \, dt}
=
\int_0^1 \frac{\mu(AE_t)}{\mu(E_t)} \, d\eta(t),
\]
we are done. 

\section{Appendix I: A Correspondence Principle for product sets}
The aim of this appendix is to outline a complete proof of the 
Correspondence Principle stated in Subsection \ref{corr}. \\

Let $G$ be a countable group and let $\cM(G)$ denote the set
of means on $G$, which is a weak*-closed and 
convex subset of the dual of $\ell^{\infty}(G)$. Given $\lambda$ 
in $\cM(G)$, we can associate to it a \emph{finitely additive} 
probability measure $\lambda'$ by
\[
\lambda'(C) = \lambda(\chi_C), \quad C \subset G. 
\]
Fix a weak*-compact and convex subset $\cC \subset \cM(G)$.
If $X$ is a compact hausdorff space, equipped with an action of 
$G$ by homeomorphisms of $X$, such that there exists a point 
$x_o$ in $X$ with a dense $G$-orbit, then we have a natural 
unital, injective and left $G$-equivariant C*-algebraic morphism
\[
\Theta_{x_o} : C(X) \ra \ell^{\infty}(G)
\]
given by $\Theta_{x_o}\varphi(g) = \varphi(gx_o)$ for all $g$ in 
$G$ and $\varphi$ in $C(X)$. Hence, its transpose 
$\Theta_{x_o}^*$ maps $\cM(G)$ into $\cP(X)$. 
In particular, if $\mu = \Theta_{x_o}^*\lambda$ and $B \subset X$
is a clopen set (so that the indicator function $\chi_B$ is a continuous
function on $X$), then 
\[
\lambda'(B_{x_o}) = \mu(B),
\]
where
\[
B_{x_o} = \Big\{ g \in G \, : \, gx_o \in B \Big\} \subset G.
\]
Note that every functional of form $\lambda \mapsto \lambda'(C)$
for $C \subset G$ is weak*-continuous, so by the weak*-compactness
of $\cC$, both the supremum and the infimum in the definitions of 
the upper and lower Banach densities with respect to $\cC$ are attained. 
Furthermore, since $\cC$ is also assumed convex, Bauer's Maximum Principle (see e.g. Theorem 7.69 in \cite{AB06}) guarantees that these extremal values are attained at \emph{extremal} elements of $\cC$, i.e.
elements of $\cC$ which cannot be written as non-trivial convex combinations of other elements in $\cC$. Since $\Theta_{x_o}^*$ is 
affine and weak*-continuous on $\cM(G)$, we see that the image 
\[
\cC_{x_o} = \Theta^{*}_{x_o}(\cC) \subset \cP(X)
\]
is weak*-compact and convex. Note that if $A \subset G$ and 
$B \subset X$ is any set, then $AB_{x_o} = (AB)_{x_o}$. Hence, if 
$A' \subset A$ is a finite set and $B$ is a clopen set in $X$, then 
$A'B$ is again clopen in $X$, so that if 
$\mu = \Theta_{x_o}^{*}\lambda$ for some $\lambda$ in $\cC$, then
\[
\lambda'(A'B_{x_o}) = \lambda'((A'B)_{x_o}) = \mu(A'B).
\]
In particular, we have 
\begin{eqnarray*}
d_{\cC}^{*}(AB_{x_o})
&\geq &
\sup 
\Big\{
d^{*}_{\cC}(A'B_{x_o}) \, : \, \textrm{$A' \subset A$ is finite}
\Big\} \\
&\geq &
\sup 
\Big\{
\lambda'(A'B_{x_o}) \, : \, \textrm{$A' \subset A$ is finite}
\Big\} \\
&= &
\sup 
\Big\{
\mu(A'B) \, : \, \textrm{$A' \subset A$ is finite}
\Big\} \\
&= &
\mu(AB),
\end{eqnarray*}
where the last equality holds because $\mu$ is $\sigma$-additive.
We conclude that for every 
$A \subset G$ and clopen set $B \subset X$, there exists an
extremal element $\mu$ in $\cC_{x_o}$ such that 
\[
d^*_{\cC}(B_{x_o}) = \mu(B) \qand d^*_{\cC}(AB) \geq \mu(AB).
\]
The same argument also gives the following inequality. Fix 
$A \subset G$ and a clopen set $B \subset X$. We can find 
an extremal element $\lambda$ in $\cC$ such that 
\[
d^{\cC}_*(AB_{x_o}) = \lambda'(AB_{x_o}).
\]
If we let $\nu = \Theta_{x_o}^*\lambda$, then $\nu$ is an 
extremal element in $\cC_{x_o}$ and
\[
\nu(B) = \lambda'(B_{x_o}) \geq d_*^{\cC}(B_{x_o}).
\]
Furthermore, we have
\begin{eqnarray*}
d_*^{\cC}(AB_{x_o}) 
&=& 
\lambda'(AB_{x_o}) = \lambda'((AB)_{x_o}) \\
&\geq &
\sup
\Big\{ 
\lambda'((A'B)_{x_o}) \, : \, \textrm{$A' \subset A$ is finite} 
\Big\} \\
&=&
\sup
\Big\{ 
\nu((A'B)_{x_o}) \, : \, \textrm{$A' \subset A$ is finite} 
\Big\} \\
&=&
\nu(AB),
\end{eqnarray*}
where the last equality holds because $\nu$ is $\sigma$-additive.
We conclude that whenever $A \subset G$ and $B \subset X$ is a
clopen set, then there exists an extremal element $\nu$ in $\cC_{x_o}$
such that
\[
d_*^{\cC}(B_{x_o}) \leq \nu(B) \qand d_*^{\cC}(B_{x_o}) \geq \nu(AB).
\]
So far, everything we have said works for every compact hausdorff space
$X$, equipped with an action of $G$ by homeomorphisms, every 
clopen subset $B \subset X$ and point $x_o$ with dense orbit. The 
triple $(X,B,x_o)$ gives rise to a set $B_{x_o} \subset G$ and what we 
have seen is that one can estimate product sets of $B_{x_o}$ with any
set $A \subset G$ in terms of the size of the union $AB$ of translates of the set $B$ under the elements in $A$ with respect to certain extremal
elements in $\cC_{x_o}$. We wish to show that every subset $B' \subset G$
is of this form. 

This undertaking is not hard. Let $2^G$ denote the set of all subsets of $G$ equipped with the product topology. Since $G$ is countable, this 
space is metrizable. Note that $G$ acts by homeomorphisms 
on $2^G$ by right translations and the set
\[
U = \Big\{ x \in 2^G \, : \, e \in x \Big\} \subset 2^G
\]
is clopen. Given any set $B' \subset G$, we shall view it as an
element (suggestively denoted by $x_o$) in $2^G$ and we let $X$ 
denote the closure of the $G$-orbit of $x_o$. If we write 
$B = U \cap X$, then $B$ is a clopen set in $X$ and $B_{x_o} = B'$.
We can summarize the entire discussion so far in the following 
\emph{Correspondence Principle}, which essentially dates back to 
Furstenberg \cite{Fu}.

\begin{correspondence}
Given $A, B' \subset G$, there exists a closed $G$-invariant subset $X \subset 2^G$, a clopen set $B \subset X$, a point $x_o$ in $X$ with a dense $G$-orbit and extremal ($\sigma$-additive) probability measures $\mu$ and $\nu$ in $\Theta_{x_o}^{*}(\cC)$ such that 
\[
d^{*}_{\cC}(B') = \mu(B) 
\qand 
d_*^{\cC}(B') \leq \nu(B)
\]
and
\[
d^{*}_{\cC}(AB') \geq \mu(AB) 
\qand 
d^{\cC}_{*}(AB') \geq \nu(AB).
\]
\end{correspondence}

Up until now, the discussion has been very general and no 
assumptions have been made on either the group $G$ or 
the set $\cC$ of means involved. In order for the 
correspondence principle to be useful we need to be able 
to better understand the extremal elements in $\cC$. 

We shall now describe a situation when such an understanding
is indeed possible. Let $G$ be a countable \emph{abelian} group
and denote by $\cL_G$ the set of all \emph{invariant means} on 
$G$. By a classical theorem of Kakutani-Markov, this set is always
non-empty and it is clearly weak*-compact and convex. Given any
compact hausdorff space $X$, equipped with an action of $G$ by 
homeomorphisms of $X$ and containing a point $x_o$ in $X$ with
a dense $G$-orbit, the map 
\[
\Theta_{x_o} : C(X) \ra \ell^{\infty}(G)
\]
defined above is injective and left $G$-equivariant. Hence, its transpose must 
map $\cL_G$ \emph{onto} the space of all $G$-invariant probability
measures on $X$, which we denote by $\cP_G(X)$. It is well-known 
that the extremal elements in $\cP_G(X)$ can be alternatively described 
as the \emph{ergodic} probability measures on $X$, i.e. those $G$-invariant measures which do not admit any $G$-invariant Borel sets 
with $\mu$-measures strictly between zero and one. 

In particular, applying the discussion above to the set $\cC = \cL_G$ and 
adopting the conventions 
\[
d^{*} = d^*_{\cL_G} \qand d_* = d_*^{\cL_G},
\]
we have proved the following version of the Correspondence Principle 
stated in Subsection \ref{corr}.

\begin{corr2}
Let $G$ be a countable abelian group and suppose $A, B' \subset G$. 
Then there exists a closed $G$-invariant subset $X \subset 2^G$, a 
clopen set $B \subset X$ and ergodic $G$-invariant probability measures $\mu$ and $\nu$ on $X$ such that 
\[
d^{*}(B') = \mu(B) 
\qand 
d_*(B') \leq \nu(B)
\]
and
\[
d^{*}(AB') \geq \mu(AB) 
\qand 
d_*(AB') \geq \nu(AB).
\]
\end{corr2}

We stress that this version of the correspondence principle does not apply to the set $\cS \subset \cM(\bZ)$ of Birkhoff means on $\bZ$ as its extremal points are not all mapped to ergodic measures under the map $\Theta_{x_o}^*$ above. 

\section{Appendix II: Proof of Proposition \ref{prop2.2}}

We recall the following simple lemma for completeness. 
\begin{lemma}
\label{nonempty}
Let $(X,\mu)$ be an ergodic $G$-space and suppose $B \subset X$
is a measurable set with positive $\mu$-measure. Then there exists 
a conull set $X' \subset X$ such that the set
\[
B_x = \Big\{ g \in G \, : \, gx \in B \Big\} \subset G
\]
is non-empty for all $x$ in $X'$.
\end{lemma}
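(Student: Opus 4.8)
The plan is to prove Lemma~\ref{nonempty}: in an ergodic $G$-space $(X,\mu)$, for any measurable $B \subset X$ with $\mu(B) > 0$, almost every point $x$ has some $g \in G$ with $gx \in B$. The natural object to consider is the set $GB = \bigcup_{g \in G} gB$ of all $G$-translates of $B$; the condition that $B_x$ is non-empty is precisely the statement that $x \in GB$. So the lemma is equivalent to asserting that $\mu(GB) = 1$, and the conull set we seek is $X' = GB$ (intersected with the full-measure set on which the $G$-action is defined).

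First I would observe that $GB$ is a measurable set, being a countable union of the measurable sets $gB$ (countable since $G$ is countable). Next, $GB$ is manifestly $G$-invariant: for any $h \in G$, we have $h(GB) = \bigcup_{g} hgB = \bigcup_{g'} g'B = GB$ by reindexing, using that left multiplication by $h$ permutes $G$. Since $B \subset GB$ and $\mu(B) > 0$, we have $\mu(GB) > 0$. Now ergodicity of $\mu$ forces any $G$-invariant measurable set to have measure either $0$ or $1$; since $\mu(GB) > 0$, we conclude $\mu(GB) = 1$.

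Finally I would set $X' = GB$, which is conull by the above, and observe that for every $x \in X'$ there is, by definition of $GB$, some $g \in G$ with $x \in gB$, i.e. $g^{-1}x \in B$, so that $g^{-1} \in B_x$ and hence $B_x \neq \emptyset$. (One may absorb into $X'$ the full-measure set on which the bi-measurable $G$-action is genuinely defined, which changes nothing.) This completes the argument.

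There is no serious obstacle here; the only point requiring a moment's care is the measurability and $G$-invariance of $GB$, both of which rest on the countability of $G$ and the reindexing observation. Everything else is an immediate application of the defining property of ergodicity stated in the excerpt, namely that there are no $G$-invariant measurable sets with measure strictly between $0$ and $1$.
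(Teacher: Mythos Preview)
Your proof is correct and follows exactly the same route as the paper: set $X' = GB$, use ergodicity (plus $\mu(B)>0$) to conclude $\mu(GB)=1$, and observe that $B_x \neq \emptyset$ precisely when $x \in GB$. The paper's proof is the one-line version of what you wrote, relying on the earlier remark in Subsection~\ref{corr} that $\mu(GB)\in\{0,1\}$ for ergodic $\mu$.
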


\begin{proof}
Since $\mu$ is ergodic and $B \subset X$ has positive $\mu$-measure,
the set $X' = GB$ is $\mu$-conull and $B_x$ is non-empty if and only if
$x$ belongs to $X'$.
\end{proof}

Let $Y$ be a compact $G$-space, i.e. a compact hausdorff space $Y$
equipped with an action of $G$ by homeomorphisms. We say that 
a point $y_o$ is \emph{$G$-transitive} if its $G$-orbit is dense in $Y$.
Recall that if $A \subset Y$ is any subset and $y$ is a point in $Y$, then
we define
\[
A_y = \Big\{ g \in G \, : \, gy \in A \Big\} \subset G. 
\]
Proposition \ref{prop2.2} will follow immediately from the following 
result which is interesting in its own right. \\

\begin{proposition}
\label{helpProp2.2}
Let $(X,\mu)$ be a (not necessarily ergodic) $G$-space. For every clopen set $A \subset Y$, $G$-transitive point $y_o$ in $Y$
and measurable set $B \subset X$, we have
\[
\mu(A_{y_o}^{-1}B) \geq \mu(A_y^{-1}B)
\]
for all $y$ in $Y$.
\end{proposition}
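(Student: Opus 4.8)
The plan is to reduce the inequality to a statement about \emph{finite} subsets of $A_y$, and then to convert a finite constraint witnessed at $y$ into one witnessed at the dense-orbit point $y_o$, using the $G$-invariance of $\mu$ to compare the resulting measures. To set this up, note first the reformulation that $x \in A_y^{-1}B$ precisely when there is some $g \in G$ with $gy \in A$ and $gx \in B$. Since $G$ is countable and
\[
A_y^{-1}B = \bigcup_{g \in A_y} g^{-1}B,
\]
continuity of $\mu$ from below gives
\[
\mu(A_y^{-1}B) = \sup\big\{ \mu(F^{-1}B) \, : \, \textrm{$F \subseteq A_y$ is finite} \big\}.
\]
Thus it suffices to show that for every finite $F \subseteq A_y$ there is a finite subset of $A_{y_o}$ whose associated union of translates of $B$ has $\mu$-measure at least $\mu(F^{-1}B)$.

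Next I would carry out the single topological step. Fix a finite set $F \subseteq A_y$, so that $gy \in A$ for every $g \in F$; equivalently, $y$ lies in the set $\bigcap_{g \in F} g^{-1}A = \{z \in Y : gz \in A \textrm{ for all } g \in F\}$. Because $A$ is clopen (in particular open) and each map $z \mapsto gz$ is a homeomorphism of $Y$, this is a nonempty open subset of $Y$. Since the $G$-orbit of $y_o$ is dense in $Y$, there exists $h \in G$ with $h y_o \in \bigcap_{g \in F} g^{-1}A$, that is, $ghy_o \in A$ for every $g \in F$. In other words, $Fh \subseteq A_{y_o}$, where $Fh = \{gh : g \in F\}$.

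The final step is the measure comparison. Using $(gh)^{-1} = h^{-1}g^{-1}$ and factoring $h^{-1}$ out on the left, one has $(Fh)^{-1}B = h^{-1}(F^{-1}B)$, whence the $G$-invariance of $\mu$ yields $\mu((Fh)^{-1}B) = \mu(F^{-1}B)$. Since $Fh \subseteq A_{y_o}$ forces $(Fh)^{-1}B \subseteq A_{y_o}^{-1}B$, we obtain $\mu(A_{y_o}^{-1}B) \geq \mu(F^{-1}B)$, and taking the supremum over all finite $F \subseteq A_y$ gives $\mu(A_{y_o}^{-1}B) \geq \mu(A_y^{-1}B)$, as desired.

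I expect the only genuinely delicate point to be recognizing the reduction itself: once one passes to finite witnesses $F \subseteq A_y$, the rest is forced. The topological content is entirely contained in the observation that a finite open constraint holding at $y$ persists, after a group translation $h$, at a point of the dense orbit of $y_o$, and this requires nothing more than the openness of $A$ and the continuity of the $G$-action. I would also remark that the argument uses only the group axioms and the left-invariance of $\mu$, so no commutativity hypothesis on $G$ is needed.
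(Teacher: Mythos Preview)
Your proof is correct and follows essentially the same approach as the paper's: reduce to finite $F \subset A_y$ via $\sigma$-additivity, use that $\bigcap_{g \in F} g^{-1}A$ is open and nonempty to find $h$ with $hy_o$ in it (so $Fh \subset A_{y_o}$), and then apply $G$-invariance of $\mu$ to get $\mu(F^{-1}B)=\mu((Fh)^{-1}B)\le \mu(A_{y_o}^{-1}B)$. The only cosmetic difference is that the paper phrases the first step with an $\eps$-approximation rather than a supremum.
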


\begin{proof}[Proof of Proposition \ref{prop2.2}]
We shall prove that whenever $(X,\mu)$ is an ergodic $G$-space, 
$Y$ is a compact $G$-space with a $G$-transitive point $y_o$ in 
$Y$ and $A \subset Y$ is a clopen set, then
\[
\mu(A_{y_o}^{-1}B) \geq d^*(A_{y_o})
\]
for every measurable subset $B \subset X$ with positive 
$\mu$-measure. Since every subset $A' \subset G$ can be written on the form $A_{y_o}$
for some compact $G$-space equipped with a $G$-transitive point 
$y_o$ and since
\[
d^*(A_{y_o}) = d^*(A_{y_o}^{-1}),
\]
this will finish the proof of Proposition \ref{prop2.2}. \\

By the Correspondence Principle, we know that we can find an (ergodic) $G$-invariant probability measure $\nu$ on $Y$ such that
\[
d^*(A_{y_o}) = \nu(A).
\]
By Proposition \ref{helpProp2.2}, we have
\begin{eqnarray*}
\mu(A_{y_o}^{-1}B) 
&\geq & 
\int_Y \mu(A_y^{-1}B) \, d\nu(y) \\
&= &
\int_Y \Big( \int_X \chi_{G(A \times B)}(y,x) \, d\mu(x) \Big) \, d\nu(y) \\
&\geq &
\int_X \nu(B_x^{-1}A) \, d\mu(x).
\end{eqnarray*}
Since $(X,\mu)$ is assumed ergodic, Lemma \ref{nonempty} applies and guarantees that there exists a $\mu$-conull $X' \subset X$ such that 
the set $B_x$ is non-empty whenever $x \in X'$. In particular, 
\[
\int_X \nu(B_x^{-1}A) \, d\mu(x) \geq \nu(A) = d^*(A_{y_o}),
\]
which finishes the proof. 
\end{proof}

\subsection{Proof of Proposition \ref{helpProp2.2}}
Note that if $A \subset Y$ is clopen and $F \subset G$ is finite, then
the set
\[
A_F = \Big\{ y \in Y \, : \, F \subset A_y \Big\} = \bigcap_{f \in F} f^{-1}A
\]
is clopen as well. 

Fix $y$ in $Y$, a clopen set $A \subset Y$ and $\eps > 0$. By 
$\sigma$-additivity of $\mu$, there exists a finite set 
$F \subset A_y$ such that 
\[
\mu(A_y^{-1}B) \leq \mu(F^{-1}B) + \eps.
\]
Since $y$ belongs to $A_F$, this set is a non-empty clopen subset of $Y$
and since $y_o$ is $G$-transitive, we conclude that there exists $g$ in $G$
such that $gy_o \in A_F$. In particular, 
\[
F \subset A_{gy_o} = A_{y_o} g^{-1},
\]
which implies
\[
\mu(F^{-1}B) = \mu(g^{-1}F^{-1}B) \leq \mu(A_{y_o}^{-1}B)
\]
and hence
\[
\mu(A_y^{-1}B) \leq \mu(A_{y_o}^{-1}B) + \eps,
\]
for all $\eps > 0$, which finishes the proof. 

\end{document}